\title{A Generalized Frankel Conjecture Via The Yang-Mills Flow}
\author[Jiangtao Li]{Jiangtao Li}
\date{\today}
\address{Department of Mathematics, University of California, San Diego, 9500 Gilmann Drive, La Jolla, CA 92092}
\email{jil320@ucsd.edu}
\theoremstyle{plain}
\newtheorem{thm}{Theorem}[section]
\newtheorem{prop}[thm]{Proposition}
\newtheorem{defn}[thm]{Definition}
\newtheorem{lem}[thm]{Lemma}
\newtheorem{cor}[thm]{Corollary}
\newtheorem{conj}[thm]{Conjecture}
\theoremstyle{definition}
\newtheorem{rk}[thm]{Remark}
\numberwithin{equation}{section}
\DeclareMathOperator{\R}{\mathbb{R}}
\DeclareMathOperator{\C}{\mathbb{C}}
\DeclareMathOperator{\Ricci}{Ric}
\DeclareMathOperator{\SO}{SO}
\DeclareMathOperator{\tr}{Tr}
\DeclareMathOperator{\Imagine}{Im}
\DeclareMathOperator{\End}{End}
\DeclareMathOperator{\Id}{Id}
\DeclareMathOperator{\YM}{YM}
\DeclareMathOperator{\vol}{vol}
\newcommand{\Li}[1]{\mathfrak{#1}}
\begin{document}

\maketitle

\begin{abstract}
    In this note, we introduce a new curvature condition called the $2-$positive bisectional curvature on compact K\"{a}hler manifolds. We then deduce a characterization theorem for manifolds with $2-$positive bisectional curvature, which can be regarded as a variant of the classical Frankel conjecture (cf.\cite{Fra61,SY80}) and its generalizations (cf.\cite{Siu80,Mok88}).
\end{abstract}

\tableofcontents

\section{Introduction}\label{Sec:Introduction}

One of the landmark results in complex differential geometry is the resolution of the Frankel's conjecture (cf.\cite{Fra61}) by Siu and Yau:
\begin{thm}[\cite{SY80}]\label{thm:FrankelConjecture}
    A compact K\"{a}hler manifold with positive bisectional curvature is biholomorphic to the complex projective space.
\end{thm}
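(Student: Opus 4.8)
The plan is to follow the circle of ideas of Siu--Yau \cite{SY80}: extract the consequences of the curvature hypothesis for the algebraic geometry of $X$, reduce the problem to a statement about minimal rational curves, produce such curves by a minimal-surface argument, and then exploit the positivity of the bisectional curvature to show that these curves behave exactly like the lines of $\mathbb{P}^n$.

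First I would record what the hypothesis buys us. Positive holomorphic bisectional curvature implies positive Ricci curvature (trace over one holomorphic factor), so $-K_X$ is ample and $X$ is a Fano manifold; in particular $X$ is projective, and by Kodaira vanishing together with Hodge symmetry $H^q(X,\mathcal{O}_X)=0$ for $q>0$ and $h^{p,0}(X)=0$ for $p>0$. Consequently $X$ is simply connected and $H^2(X,\mathbb{Z})=\NS(X)=\Pic(X)$ is free abelian of rank $b_2(X)$. The theorem is then reduced to three assertions: $X$ is covered by rational curves of minimal anticanonical degree; $b_2(X)=1$; and the family of these curves exhibits $X$ as $\mathbb{P}^n$ with $n=\dim_{\mathbb{C}}X$.

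Next I would produce the rational curves. Since $X$ is Fano and simply connected, $\pi_2(X)\neq 0$, so one can minimize area among maps $S^2\to X$ in a nontrivial free homotopy class and obtain a nonconstant branched minimal immersion $f\colon S^2\to X$, which is automatically a stable harmonic map. The decisive step is to feed variations built from $(1,0)$-vector fields into the second-variation inequality: the positivity of the bisectional curvature then forces the $(0,1)$-part of $df$ to vanish, possibly after reversing the orientation of $S^2$, so $f$ is $\pm$holomorphic and its image is a rational curve. A deformation argument puts such a curve through a general point of $X$, and the curvature positivity pins its anticanonical degree at the minimal value $n+1$; by adjunction the curve is then a smooth rational curve $C\cong\mathbb{P}^1$ with normal bundle $\mathcal{O}_{\mathbb{P}^1}(1)^{\oplus(n-1)}$, i.e.\ a \emph{line}. (Alternatively, one could invoke Mori's bend-and-break to cover any Fano manifold by rational curves of anticanonical degree at most $n+1$, and use the curvature only to force the minimal degree to equal $n+1$ and $b_2=1$.)

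Finally I would upgrade this to an identification with $\mathbb{P}^n$. Propagating the curvature positivity along the family of minimal rational curves, one shows that any two general points of $X$ lie on such a curve and that $b_2(X)=1$; in the analytic setup this is a monodromy argument over the space of minimal harmonic $2$-spheres. Then $-K_X$ is divisible by $n+1$ in $\Pic(X)=\Z$, and the Kobayashi--Ochiai characterization of projective space (equivalently, the criterion that $-K_X\cdot C\ge n+1$ for every rational curve forces $X\cong\mathbb{P}^n$) completes the proof. The hard part will be precisely the place where the curvature hypothesis is consumed: the second-variation computation showing the area-minimizing spheres are holomorphic, together with the proof that the resulting family of rational curves is as large and as flat as the family of lines in $\mathbb{P}^n$, equivalently that $b_2(X)=1$. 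Everything else is formal once this is in hand.
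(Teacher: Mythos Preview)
The paper does not actually prove \Cref{thm:FrankelConjecture}; it is stated in the introduction purely as a cited background result (attributed to \cite{SY80}) to motivate the main theorem, and no argument for it appears anywhere in the text. There is therefore no ``paper's own proof'' to compare your proposal against. Your outline is a faithful sketch of the original Siu--Yau strategy, and as such it is correct in spirit, though the details around $b_2=1$ and the precise degree computation are where the real work lies, as you yourself acknowledge.

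That said, it is worth noting that the paper's machinery for its main theorem furnishes a genuinely different route to \Cref{thm:FrankelConjecture}, one that bypasses the harmonic-map/second-variation step entirely. Given a rational curve $f\colon\C P^1\to M$ (whose existence one may take from Mori's theorem, since positive bisectional curvature forces $M$ to be Fano), one pulls back $TM$ and runs the Yang-Mills flow on the resulting bundle over $\C P^1$. Positive bisectional curvature makes the initial contracted curvature $\sqrt{-1}\Lambda_\omega F_{A_0}$ quasi-positive (not merely $2$-quasi-positive), and the maximum principle of \Cref{cor:MaximumPrincipleYangMills} adapted to $1$-positivity then forces every eigenvalue $a_i$ of the limiting connection to satisfy $a_i\ge 1$; combined with $a_n\ge 2$ this gives $-K_M\cdot C\ge n+1$, hence pseudoindex $i(M)\ge n+1$, and one concludes $M\cong\C P^n$ by \cite{CMSB02}. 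The trade-off: your Siu--Yau outline produces the rational curves internally via area-minimizing spheres and uses the curvature hypothesis twice (holomorphicity and degree), whereas the Yang-Mills route imports the existence of rational curves from algebraic geometry and consumes the curvature hypothesis only once, in the maximum principle.
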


A purely algebro-geometric generalization of the conjecture was proposed by Hartshorne (cf.\cite{Har70}) and proved by Mori (cf.\cite{Mor79}). This is one of the first few results in the modern study of Fano manifolds and Mori's original technique has tremendous influence on the later development of the theory on Fano manifolds.

On the differential geometric side, \Cref{thm:FrankelConjecture} has also been generalized in multiple different ways. Under the non-compact setting, it was generalized by the famous Yau's uniformization conjecture:
\begin{conj}[\cite{Yau88,SY94}]
    A non-compact K\"{a}hler manifold with positive bisectional curvature is biholomorphic to the complex Euclidean space.
\end{conj}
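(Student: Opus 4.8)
The plan is to follow the strategy developed by Mok and refined through the work of Ni, Tam, Chau and ultimately G.\ Liu: uniformize via the Kähler--Ricci flow together with the algebra of holomorphic functions of polynomial growth. First I would run the flow $\partial_t g_{i\bar{j}} = -R_{i\bar{j}}$ with the given metric as initial data; by Shi's short-time existence and the maximum-principle arguments of Bando and Mok, nonnegativity (resp.\ positivity) of the holomorphic bisectional curvature is preserved, and after an instant the curvature becomes strictly positive everywhere with suitable decay, so we may assume the evolved metric has bounded geometry. A first consequence is that $M$ is Stein: the convexity of the distance function coming from nonnegative Ricci curvature, together with Greene--Wu / Demailly-type approximation, produces a strictly plurisubharmonic exhaustion, so $M$ is holomorphically convex and carries many global holomorphic functions.

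The heart of the argument is the construction of enough holomorphic functions of \emph{polynomial} growth. The key analytic input is a Liouville-type theorem: on a complete Kähler manifold with nonnegative holomorphic bisectional curvature, the space $\mathcal{O}_d(M)$ of holomorphic functions of polynomial growth of order at most $d$ is finite dimensional, with $\dim \mathcal{O}_d(M) \le \dim \mathcal{O}_d(\mathbb{C}^n)$. This follows by combining Cheng--Yau and Ni's heat-kernel gradient estimates with a monotonicity formula for the transgressed first Chern form, which bounds the ``degree'' of a holomorphic function in terms of the curvature integrated over geodesic spheres. One then shows, following Mok's embedding theorem, that the graded ring $R(M) = \bigoplus_{d \ge 0} \mathcal{O}_d(M)$ is finitely generated and that the induced map $\Phi \colon M \to \mathbb{C}^N$ is a proper holomorphic embedding onto a normal affine algebraic variety $V$; positivity of the curvature forces $V$ to be smooth of dimension $n$ and, via the Cheeger--Gromoll soul argument (or the convexity of $d^2$), diffeomorphic to $\mathbb{R}^{2n}$.

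It remains to identify $V$ with $\mathbb{C}^n$ as a complex manifold. Here I would compare volume growth: Bishop--Gromov gives the upper bound $\vol(B_r) \le C r^{2n}$, while monotonicity along the Kähler--Ricci flow (Perelman-type reduced-volume or $\mathcal{W}$-entropy estimates, or matching $\dim R(M)$ with the Hilbert polynomial of $\mathbb{C}^n$) promotes this to a two-sided estimate, i.e.\ Euclidean volume growth. In the maximal-volume-growth regime one invokes G.\ Liu's resolution of the uniformization conjecture: by Cheeger--Colding theory and positivity of the curvature the tangent cone at infinity of $V$ is the flat cone $\mathbb{C}^n$, and a three-circle / dimension-count rigidity theorem for holomorphic functions of fixed degree then forces $V \cong \mathbb{C}^n$ biholomorphically.

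The step I expect to be the genuine obstacle is the passage to Euclidean volume growth: positivity of the holomorphic bisectional curvature does not by itself supply a lower bound on $\vol(B_r)$, and without it the maximal-volume-growth uniformization is unavailable and the surjectivity of $\Phi$ is unclear. Thus the proposal above is complete only under the additional hypothesis of maximal volume growth (recovering the theorems of Mok and of Liu); the general conjecture, in which the flow may collapse volume at infinity, requires essentially new input on the long-time structure of the Kähler--Ricci flow and remains open. An honest write-up should either assume Euclidean volume growth or flag this gap explicitly.
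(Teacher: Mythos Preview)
The statement you are trying to prove is not a theorem in the paper but an \emph{open conjecture} (Yau's uniformization conjecture). The paper states it only to situate its own results historically and explicitly notes that ``the best result so far was obtained by G.~Liu, who confirmed the conjecture under the additional assumption of maximum volume growth.'' There is therefore no proof in the paper for you to compare against.

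Your write-up is in fact an accurate survey of the known strategy (Mok, Ni--Tam, Chau--Tam, Liu), and you correctly isolate the genuine obstruction: positivity of the bisectional curvature does not by itself force Euclidean volume growth, so the three-circle/tangent-cone arguments of Liu are unavailable in general. Your final paragraph is exactly right---what you have sketched recovers the Mok and Liu theorems under the maximal-volume-growth hypothesis, and the full conjecture remains open. So there is no error in your reasoning, but you should not present this as a proof of the conjecture; it is a conditional result, and the paper itself makes no claim beyond that.
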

\noindent The conjecture has received a lot of attention during the last decades and there has been much progress towards it by many mathematicians (see \cite{Ni98,NT03,Ni04,CFYZ06,Liu16}). The best result so far was obtained by G. Liu, who confirmed the conjecture under the additional assumption of maximum volume growth. We refer the readers to the survey \cite{Liu20} for more details on this problem.

There are also various generalizations in the compact case. For instance, Siu (cf.\cite{Siu80}) gave a curvature characterization of complex projective spaces and smooth complex hyperquadrics using similar methods as in \cite{SY80}. Later, Mok (cf.\cite{Mok88}) studied compact K\"{a}hler manifolds with non-negative bisectional curvature and showed that they are exactly all the compact Hermitian symmetric spaces. In addition, it was shown by Gu-Zhang (cf.\cite{GZ10}) and Feng-Liu-Wan (cf.\cite{FLW17}) that the Frankel's conjecture still holds if the assumption of positive bisectional curvature is weakened to positive orthogonal bisectional curvature.

In this note we give another generalization of the classical Frankel's conjecture similar to that of Siu (cf.\cite{Siu80}). We shall introduce a new curvature condition for compact K\"{a}hler manifolds of dimension at least 2, which we call $2-$positive bisectional curvature (see Definition \ref{defn:2PositiveBisectionalCurvature}). We remark that \cite{Siu80} also defined $m-$positivity of bisectional curvature but it is a different curvature condition from ours. One of differences is that the $m-$positivity in \cite{Siu80} requires non-negativity of the bisectional curvature while ours in this paper doesn't impose this restriction. The main theorem of this paper gives a characterization of compact K\"{a}hler manifolds with $2-$positive bisectional curvature.

\begin{thm}\label{thm:MainTheorem}
    Let $n\ge 2$. An $n$ dimensional compact K\"{a}hler manifold $M^n$ admits a K\"{a}hler metric with $2-$positive bisectional curvature if and only if
    \begin{enumerate}
        \item $M^2$ is biholomorphic to a del Pezzo surface if $n=2$;
        \item $M^n$ is biholomorphic to either $\C P^n$ or the smooth complex hyperquadric $Q^n$ if $n\ge 3$.
    \end{enumerate}
\end{thm}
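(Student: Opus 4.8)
The plan is to treat the cases $n=2$ and $n\ge 3$ separately, and in each to handle both directions of the equivalence.

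\emph{Reductions and the surface case.} The first step is to note that two-positive bisectional curvature forces positive Ricci curvature: summing the two-positivity inequality over all $\binom n2$ pairs of vectors of a unitary frame produces $(n-1)$ times the Ricci form. Hence any such $M$ is Fano, in particular simply connected with $h^{p,0}=0$. When $n=2$ the holomorphic tangent space is two-dimensional, so for a unit vector $X$ the sum of the two smallest eigenvalues of the Hermitian form $R(X,\bar X,\cdot,\cdot)$ is just its full trace $\Ricci(X,\bar X)$; thus on a surface two-positive bisectional curvature is \emph{equivalent} to $\Ricci>0$. Part (1) therefore amounts to the assertion that a compact complex surface carries a Kähler metric of positive Ricci curvature if and only if it is del Pezzo (i.e.\ a smooth Fano surface). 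The forward implication is the Fano-ness just established; the converse follows by using Yau's solution of the Calabi conjecture to prescribe the Ricci form of a metric in $c_1(M)$ to be a fixed positive representative $\omega_0\in c_1(M)$, producing a metric with $\Ricci=\omega_0>0$.

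\emph{The case $n\ge 3$, only-if direction.} Here two-positivity is strictly stronger than Fano, and this is where the Yang--Mills flow enters. Starting from a metric $\omega_0$ with two-positive bisectional curvature, I would run the Yang--Mills flow on the holomorphic tangent bundle $T^{1,0}M$. The central step---and the main technical content---is to show that two-positivity of the curvature endomorphism is preserved along the flow; this requires computing the evolution equation of the curvature, isolating the reaction terms, and verifying that Hamilton's maximum principle for systems applies to the invariant cone of curvature operators whose restriction to every complex two-plane has positive trace. Granting preservation together with the expected long-time existence and subconvergence, one then analyses the limit. If $T^{1,0}M$ split off a proper holomorphic subbundle, a strong-maximum-principle argument in the spirit of Mok's work would force the splitting to be parallel and exhibit $M$ locally as a product with a product metric; but a product metric on a factor of complex dimension $\ge 2$ has a two-dimensional null space for the bisectional form, contradicting two-positivity. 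Hence $T^{1,0}M$ is ``irreducible'', the limiting metric has non-negative bisectional curvature, and by Mok's theorem $M$ is biholomorphic to an irreducible compact Hermitian symmetric space. Running through that finite list, two-positivity excludes every space of rank $\ge 2$ except the quadrics (for the other families the null locus of $R(X,\bar X,\cdot,\cdot)$ at a point has complex dimension $\ge 2$), leaving only $\C P^n$ and $Q^n$; alternatively, one can phrase the endgame through the Kobayashi--Ochiai characterization of $\C P^n$ and $Q^n$ as the Fano $n$-folds of index $\ge n$, the analytic input being that two-positive bisectional curvature forces the index to be large.

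\emph{The case $n\ge 3$, if direction.} Conversely one must exhibit the metrics. On $\C P^n$ the Fubini--Study metric has strictly positive bisectional curvature, hence a fortiori two-positive. On $Q^n$ with $n\ge 3$ the invariant Kähler--Einstein metric has non-negative bisectional curvature, and at each point the null space of $R(X,\bar X,\cdot,\cdot)$ is at most one complex dimensional (it is cut out by the quadratic form defining $Q^n$); so the two smallest eigenvalues of $R(X,\bar X,\cdot,\cdot)$ sum to a strictly positive number, which is exactly two-positive bisectional curvature. This uses $n\ge 3$ essentially: for $n=2$ one has $Q^2\cong\C P^1\times\C P^1$, already covered above as a del Pezzo surface.

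\emph{Main obstacle.} The crux is the preservation statement: unlike non-negativity of bisectional curvature under the Kähler--Ricci flow, two-positivity need not be preserved by the Kähler--Ricci flow, which is precisely why one passes to the Yang--Mills flow on $T^{1,0}M$, whose reaction terms are quadratic in the curvature endomorphism alone and better adapted to the two-positive cone. Identifying the correct invariant cone, controlling the relation between the flowed Hermitian metric and the Kähler geometry of $M$, and ruling out finite-time bubbling when $n\ge 3$ are the hard points; once the limiting curvature positivity is in hand, the passage to a classification is comparatively soft, resting on the uniformization results for bisectional curvature recalled in the introduction and on Kobayashi--Ochiai-type characterizations of $\C P^n$ and $Q^n$.
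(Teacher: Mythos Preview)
Your treatment of the case $n=2$ and of the ``if'' direction for $n\ge3$ is correct and matches the paper. The gap is in the ``only if'' direction for $n\ge3$.

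You propose to run the Yang--Mills flow on $T^{1,0}M$ over the base $M$ itself and then invoke Mok's theorem at the limit. This cannot work as stated. The Yang--Mills flow deforms the Hermitian metric on the \emph{bundle} $T^{1,0}M$ while keeping the base K\"ahler form $\omega_0$ fixed; the limiting bundle metric $h_\infty$ is not the tangent metric of any K\"ahler structure on $M$, so Mok's theorem (which concerns the Riemann curvature of a K\"ahler metric) has nothing to grip. Over a base of complex dimension $\ge2$ the flow can bubble, so smooth convergence is unavailable. And the condition you would have to preserve is a pointwise condition on $F_A(U,\bar U)$ for \emph{every} direction $U$, not merely on the single contracted endomorphism $\sqrt{-1}\Lambda_\omega F_A$; that is a far harder invariant-cone problem than the one Hamilton's maximum principle handles, and you give no argument for it.

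The paper's idea is to push the flow down to dimension one. For each rational curve $f:\C P^1\to M$ one runs the Yang--Mills flow on $E=f^*T^{1,0}M$ over $\C P^1$, where long-time existence and smooth convergence to a Yang--Mills connection are classical. On a Riemann surface the full curvature $F_A$ is determined by $\sqrt{-1}\Lambda_\omega F_A$, and the latter evolves by the pure heat equation $\partial_t(\sqrt{-1}\Lambda_\omega F_A)=\Delta(\sqrt{-1}\Lambda_\omega F_A)$, so Hamilton's maximum principle preserves its $2$-positivity with no reaction term to analyse. At the limit $\sqrt{-1}\Lambda_\omega F_{A_\infty}$ is parallel with integer eigenvalues $a_1\le\cdots\le a_n$, the degrees in the Grothendieck splitting $E\cong\bigoplus_i\mathcal O(a_i)$; $2$-positivity yields $a_1+a_2\ge1$, the tangent map $T\C P^1\hookrightarrow E$ forces $a_n\ge2$, and summing gives $-K_M\cdot C=\sum a_i\ge n$ for every rational curve. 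Hence the pseudoindex is at least $n$, and the Miyaoka/Dedieu--H\"oring classification (the pseudoindex refinement of the Kobayashi--Ochiai theorem you alluded to) finishes the proof.
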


As we shall see in \Cref{Sec:2PositiveBisectionalCurvature}, (1) of \Cref{thm:MainTheorem} follows directly from the definition of $2-$positivity and Yau's theorem (cf.\cite{Yau78}). The main part of the note is devoted to the proof of (2). We briefly mention the idea and key ingredients in the proof here. Firstly, by a classical result in algebraic geometry on the pseudoindex of Fano manifolds (cf.\cite{Miy04,DH17}), the statement is equivalent to an estimate on the minimal anti-canonical degree of rational curves on $M^n$. To establish the desired estimate, we consider the Yang-Mills flow on the Riemann sphere $\C P^1$. The long time existence and convergence are well known due to existing theory on the Yang-Mills flow on Riemann surfaces (cf.\cite{Don85,Das92,Rd92}). The key will be Corollary \ref{cor:MaximumPrincipleYangMills}, a maximum principle for the Yang-Mills flow, which follows from Hamilton's tensor maximum principle \Cref{thm:HamiltonMaximumPrinciple}. The Hamilton's maximum principle is well known and used widely in the applications of the Ricci flow. However, it seems never appearing in the Yang-Mills setting.

The paper is organized as follows: In \Cref{Sec:2PositiveBisectionalCurvature}, we define the curvature condition of $2-$positive bisectional curvature and construct such metrics on del Pezzo surfaces and smooth complex hyperquadrics. In \Cref{Sec:YangMills}, we give a review of the fundamental results on the Yang-Mills flow on Riemann surfaces, which will be used in the proof of the main theorem. Afterwards, we establish the key maximum principle Corollary \ref{cor:MaximumPrincipleYangMills}. In \Cref{Sec:Proof}, we complete the proof of the main theorem. Finally, we discuss some related questions and also relevant works by others in the last section.

\section{$2$-positive bisectional curvature}\label{Sec:2PositiveBisectionalCurvature}

Let $(M,g)$ be an $n$ dimensional compact K\"{a}her manifold. Let $TM$ be the holomorphic tangent bundle of $M$. At any point $x\in M$, we fix any holomorphic tangent vector $U\in T_xM$. Then $R(U,\Bar{U})$ is a Hermitian operator on $T_xM$ and thus diagonalizable with real eigenvalues 
\[\lambda_1(x,U)\leq\cdots\leq\lambda_n(x,U).\]
We introduce a new notion of positivity for bisectional curvatures which we call $2-$positive bisectional curvature:

\begin{defn}[$2$-positive bisectional curvature]\label{defn:2PositiveBisectionalCurvature}
    $(M,g)$ has $2$-positive bisectional curvature if $\lambda_1(x,U)+\lambda_2(x,U)>0$ for any $x\in M$ and any $U\in T_xM$. 
\end{defn}

We give some examples of compact K\"{a}hler manifolds with $2$-positive bisectional curvature. Observe that when $n=2$, $2$-positivity of bisectional curvature is equivalent to the positivity of Ricci curvature. Therefore, by Yau's resolution of the Calabi conjecture (cf.\cite{Yau78}), all Fano surfaces, namely, del Pezzo surfaces, admit K\"{a}hler metrics with $2$-positive bisectional curvature.

When $n>2$, it is well known that the Fubini-Study metric on the complex projective space has positive bisectional curvature and in particular has $2$-positive bisectional curvature. Next we give another example of compact K\"{a}hler manifolds with positive $2$-positive bisectional curvature. Indeed, as shown in the main theorem, these are the only manifolds with $2-$positive bisectional curvature in higher dimensions.

Recall that a \emph{complex hyperquadric} is the following nonsingular hypersurface in $\C P^{n+1}$:
\[Q^n=\left\{[X_0:\cdots:X_{n+1}]\in\C P^{n+1}:X_0^2+\cdots+X_{n+1}^2=0\right\}\subseteq \C P^{n+1}.\]
Note that $Q^n$ can be regarded compact Hermitian symmetric space. In fact, we have 
\begin{equation}
    Q^n\cong \frac{\SO(n+2,\R)}{\SO(n,\R)\times\SO(2,\R)}.
\end{equation}

We will calculate the bisectional curvature of $Q^n$ equipped with the Hermitian symmetric metric and show that it also has $2$-positive bisectional curvature. This was carried out in \cite{Siu80} using the duality between compact and non-compact Hermitian manifolds. Here we give a more direct computation. The basic geometric theory of Riemannian symmetric spaces is needed. For this, we refer readers to the standard texts, e.g. \cite[Chapter 8]{Wol11}, \cite{Hel78}.

For the Hermitian symmetric space $Q^n$, the Cartan decomposition is 
\begin{equation}
    \Li{so}(n+2,\R)=\Li{so}(n,\R)\oplus\Li{so}(2,\R)\oplus\Li{p}.
\end{equation}
where
\begin{equation}
    \Li{p}=\left\{\begin{pmatrix}
        O&-X^T\\
        X&O
    \end{pmatrix}:X\in M_{2\times n}(\R)\right\}.
\end{equation}
The tangent space of $Q^n$ at the point $\SO(n,\R)\times\SO(2,\R)$ can be identified with the subspace $\Li{p}$, which can be further identified with $M_{2\times n}(\R)$. Take $M_{2\times n}(\R)$ as the tangent space. The complex structure $J$ is then given as follows:
\begin{align*}
    J\begin{pNiceMatrix}[last-row]
        0&\cdots&1&\cdots&0\\
        0&\cdots&0&\cdots&0\\
        &&i&&
    \end{pNiceMatrix}
    &=J\begin{pNiceMatrix}[last-row]
        0&\cdots&0&\cdots&0\\
        0&\cdots&1&\cdots&0\\
        &&i&&
    \end{pNiceMatrix},\\
    J\begin{pNiceMatrix}[last-row]
        0&\cdots&0&\cdots&0\\
        0&\cdots&1&\cdots&0\\
        &&i&&
    \end{pNiceMatrix}
    &=J\begin{pNiceMatrix}[last-row]
        0&\cdots&-1&\cdots&0\\
        0&\cdots&0&\cdots&0\\
        &&i&&
    \end{pNiceMatrix},\\
    i&= 1,\cdots, n.
\end{align*}
The complexified tangent space is $M_{2\times n}(\C)$ and it decomposes into holomorphic and conjugate holomorphic parts as below:
\begin{equation}
    M_{2\times n}(\C)=\left\{\begin{pmatrix}
        a_1&\cdots&a_n\\
        -\sqrt{-1}a_1&\cdots&-\sqrt{-1}a_n
    \end{pmatrix}:a_i\in\C\right\}\oplus\left\{\begin{pmatrix}
        a_1&\cdots&a_n\\
        \sqrt{-1}a_1&\cdots&\sqrt{-1}a_n
    \end{pmatrix}:a_i\in\C\right\}.
\end{equation}

The Hermitian symmetric metric on the tangent space, which is identified with $\Li{p}$, is simply the restriction $-\lambda B$ onto $\Li{p}$, where $B$ is the Killing form of $\Li{so}(n+2,\R)$ and $\lambda$ is any positive constant. For simplicity, we let $\lambda=\frac{1}{2}$. Then the metric $g$ is
\begin{equation*}
    g\left(\begin{pmatrix}
        O&-X^T\\
        X&O
    \end{pmatrix},
    \begin{pmatrix}
        O&-Y^T\\
        Y&O
    \end{pmatrix}\right)=-\frac{1}{2}\tr\left(\begin{pmatrix}
        O&-X^T\\
        X&O
    \end{pmatrix}
    \begin{pmatrix}
        O&-Y^T\\
        Y&O
    \end{pmatrix}\right)=\tr XY^T.
\end{equation*}
If we regard $M_{2\times n}(\R)$ as the tangent space, we get the following expression of the metric 
\[g(X,Y)=\tr XY^T,\quad \forall X,Y\in M_{2\times n}(\R).\]

Let $R$ be the Riemann curvature tensor of $Q^n$. Under the identification of the tangent space with $\Li{p}$, the curvature formula for Riemannian symmetric spaces (see \cite[\S 8.4]{Wol11}) implies that
\begin{align*}
    &R\left(\begin{pmatrix}
        O&-X^T\\
        X&O
    \end{pmatrix},\begin{pmatrix}
        O&-Y^T\\
        Y&O
    \end{pmatrix}\right)\begin{pmatrix}
        O&-Z^T\\
        Z&O
    \end{pmatrix}\\
    =&-\left[\left[\begin{pmatrix}
        O&-X^T\\
        X&O
    \end{pmatrix},\begin{pmatrix}
        O&-Y^T\\
        Y&O
    \end{pmatrix}\right],\begin{pmatrix}
        O&-Z^T\\
        Z&O
    \end{pmatrix}\right]\\
    =&\begin{pmatrix}
        O&-X^TYZ^T-Z^TYX^T+Y^TXZ^T+Z^TXY^T\\
        ZY^TX+XY^TZ-ZX^TY-YX^TZ&O
    \end{pmatrix}.
\end{align*}
Therefore, if we identify the tangent space with $M_{2\times n}(\R)$, then the curvature tensor is 
\begin{equation}\label{eq:CurvatureFormula}
    R(X,Y)Z=ZY^TX+XY^TZ-ZX^TY-YX^TZ, \quad \forall X,Y,Z\in M_{2\times n}(\R).
\end{equation}

Given the above discussion, we can get the following expression for the bisectional curvature:
\begin{lem}\label{lem:BisectionalCurvature}
    Let 
    \[U=\begin{pmatrix}
        a_1&\cdots&a_n\\
        -\sqrt{-1}a_1&\cdots&-\sqrt{-1}a_n
    \end{pmatrix},\quad V=\begin{pmatrix}
        b_1&\cdots&b_n\\
        -\sqrt{-1}b_1&\cdots&-\sqrt{-1}b_n
    \end{pmatrix}\]
    be holomorphic tangent vectors. Then 
    \begin{equation}\label{eq:BisectionalCurvature}
        R(U,\Bar{U},V,\Bar{V})=4\left(\sum_{i=1}^n|a_i|^2\right)\left(\sum_{i=1}^n|b_i|^2\right)-16\sum_{i<j}\Imagine(\Bar{a_i}a_j)\Imagine(b_i\Bar{b_j}).
    \end{equation}
\end{lem}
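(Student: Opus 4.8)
The plan is to compute $R(U,\bar U,V,\bar V)$ directly from the curvature formula \eqref{eq:CurvatureFormula}, being careful about how the real curvature tensor is extended $\C$-multilinearly to the complexified tangent space $M_{2\times n}(\C)$. First I would record the bilinear form: the metric $g(X,Y)=\tr XY^T$ is extended $\C$-bilinearly (not Hermitian) to $M_{2\times n}(\C)$, so that $R(U,\bar U,V,\bar V) = g(R(U,\bar U)V,\bar V)$, and the curvature operator $R(U,\bar U)Z = Z\bar U^T U + U\bar U^T Z - Z U^T \bar U - \bar U U^T Z$ obtained by polarizing \eqref{eq:CurvatureFormula}. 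So the whole computation reduces to assembling four matrix products $Z W^T$ or $W^T Z$ for various $W \in \{U,\bar U\}$ and tracing against $\bar V^T$.

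The key simplification is that every matrix appearing (namely $U,\bar U,V,\bar V$) has its second row equal to $\mp\sqrt{-1}$ times its first row, so each is determined by a single row vector in $\C^n$. Writing $a=(a_1,\dots,a_n)$, $b=(b_1,\dots,b_n)$, so $U$ has rows $(a,-\sqrt{-1}a)$ and $\bar U$ has rows $(\bar a, \sqrt{-1}\bar a)$, and similarly for $V,\bar V$, I would compute the $2\times 2$ "Gram-type" matrices $U\bar U^T$, $U^T\bar U$, etc. For instance $U^T\bar U$ is an $n\times n$ matrix whose $(i,j)$ entry is $a_i\bar a_j + (-\sqrt{-1}a_i)(\sqrt{-1}\bar a_j) = 2a_i\bar a_j$; while $\bar U U^T$ is the $2\times 2$ matrix $\begin{pmatrix} \bar a\cdot a & \dots \\ \dots & \dots\end{pmatrix}$ — but note $\bar a \cdot a$ here means $\sum \bar a_i a_i = |a|^2$ in one entry and $\sum \bar a_i \cdot(-\sqrt{-1})a_i$-type sums in the others, so one actually gets $\bar U U^T = |a|^2\begin{pmatrix}1 & -\sqrt{-1}\\ \sqrt{-1}&1\end{pmatrix}$. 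Plugging these into $g(R(U,\bar U)V,\bar V)$ and collecting terms, the "diagonal" contributions $\sum_i |a_i|^2|b_i|^2$ and cross terms $\sum_{i\ne j}$ will reorganize; the $\Imagine(\bar a_i a_j)\Imagine(b_i\bar b_j)$ shape should emerge after using $\Real(\bar a_i a_j \bar b_i b_j) = \Real(\bar a_i a_j)\Real(\bar b_i b_j) - \Imagine(\bar a_i a_j)\Imagine(\bar b_i b_j)$ and symmetrizing over $i\leftrightarrow j$ so the real parts cancel against the leading product term.

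The main obstacle I expect is purely bookkeeping: keeping straight which products are $2\times 2$ versus $n\times n$, tracking the four sign patterns from the $\pm\sqrt{-1}$ in the second rows, and correctly handling the $\C$-bilinear (as opposed to Hermitian) extension — it is easy to accidentally conjugate in the wrong place and get a Hermitian pairing, which would change the answer. A useful internal consistency check is to set $V=U$: then \eqref{eq:BisectionalCurvature} must reduce to the holomorphic sectional curvature $R(U,\bar U,U,\bar U) = 4|a|^4 - 16\sum_{i<j}\Imagine(\bar a_i a_j)^2$, which one can independently sanity-check against the known constant-holomorphic-sectional-curvature behavior on the totally geodesic $\C P^1$'s inside $Q^n$, and another check is $n=1$ where $Q^1\cong\C P^1$ forces the sum $\sum_{i<j}$ to be empty and the formula to give $4|a_1|^2|b_1|^2$, consistent with $\C P^1$ of constant curvature. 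Once the algebra is organized around the single-row-vector reduction, the identity \eqref{eq:BisectionalCurvature} falls out in a few lines.
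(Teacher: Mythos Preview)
Your proposal is correct and is exactly the approach the paper takes: it writes $R(U,\bar U,V,\bar V)=\tr(V\bar U^TU\bar V^T+U\bar U^TV\bar V^T-VU^T\bar U\bar V^T-\bar U U^TV\bar V^T)$ from \eqref{eq:CurvatureFormula} and then says only that \eqref{eq:BisectionalCurvature} ``follows from direct calculation.'' Your rank-one row-vector reduction and the $n=1$, $V=U$ sanity checks are just a more explicit organization of that same direct calculation.
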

\begin{proof}
    By the curvature formula \Cref{eq:CurvatureFormula} and the metric expression, we get
    \begin{align*}
        R(U,\Bar{U},V,\Bar{V})=\tr(V\Bar{U}^TU\Bar{V}^T+U\Bar{U}^TV\Bar{V}^T-VU^T\Bar{U}\Bar{V}^T-\Bar{U}U^TV\Bar{V}^T).
    \end{align*}
    \Cref{eq:BisectionalCurvature} then follows from direct calculation.
\end{proof}

As a corollary, we can deduce the following $2-$positivity property of bisectional curvature for $Q^n$:
\begin{cor}
    The Hermitian symmetric space $Q^n$ has nonnegative bisectional curvature. More precisely, for any holomorphic tangent vector $U\in T^{1,0}_xQ^n$, $R(U,\Bar{U}):T_x^{1,0}Q^n\to T_x^{1,0}Q^n$ has nonnegative eigenvalues and at most one of them is zero. In particular, $Q^n$ has $2$-positive bisectional curvature.
\end{cor}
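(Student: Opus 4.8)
The plan is to exploit the explicit formula \Cref{eq:BisectionalCurvature} to understand the Hermitian operator $R(U,\bar U)$ for a fixed holomorphic tangent vector $U$. Fix
\[U=\begin{pmatrix} a_1&\cdots&a_n\\ -\sqrt{-1}a_1&\cdots&-\sqrt{-1}a_n\end{pmatrix},\]
which we identify with the vector $a=(a_1,\dots,a_n)\in\C^n$, and similarly write $V$ for $b=(b_1,\dots,b_n)\in\C^n$. Then \Cref{eq:BisectionalCurvature} expresses the quadratic form $Q(b):=R(U,\bar U,V,\bar V)$ in terms of $a$ and $b$. First I would rewrite the cross term: setting $\omega_{ij}=\operatorname{Im}(\bar a_i a_j)$, the matrix $(\omega_{ij})$ is a real antisymmetric $n\times n$ matrix, and one checks that $\sum_{i<j}\operatorname{Im}(\bar a_i a_j)\operatorname{Im}(b_i\bar b_j)$ is, up to a constant, the Hermitian form $b\mapsto \langle \Omega b,\bar b\rangle$ or similar, where $\Omega$ is built from $a$. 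The point is that $\Omega$ has rank at most $2$ — indeed $\omega_{ij}$ is (up to scale) the imaginary part of the rank-one matrix $\bar a a^T$, so as an operator it factors through $\operatorname{span}\{a,\bar a\}$ (or rather through the real span of the real and imaginary parts of $a$), hence has rank at most $2$.

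Next I would diagonalize. The first term $4(\sum|a_i|^2)(\sum|b_i|^2)=4\|a\|^2\|b\|^2$ is just $4\|a\|^2$ times the identity form on $\C^n$. So $R(U,\bar U)=4\|a\|^2\,\Id - C$ where $C$ is the Hermitian operator (of rank at most $2$, since the cross term vanishes whenever $b$ is orthogonal — in the relevant real sense — to both real and imaginary parts of $a$) coming from the $16\sum_{i<j}$ term. Therefore $R(U,\bar U)$ has eigenvalue exactly $4\|a\|^2>0$ on the orthogonal complement of a subspace of (complex) dimension at most $2$, and the remaining one or two eigenvalues are $4\|a\|^2$ minus the eigenvalues of $C$ restricted to that small subspace. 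The main computational step is thus to evaluate the two (possibly nonzero) eigenvalues of $C$, which reduces to a $2\times 2$ (or lower) problem. By a unitary change of coordinates on $\C^n$ I may assume $a=(\alpha,0,\dots,0)$ with $\alpha\ge 0$, and then $\operatorname{Im}(\bar a_i a_j)=0$ for all $i,j$, so the cross term vanishes identically; hence for this normalized $U$ we get $R(U,\bar U)=4\alpha^2\Id=4\|a\|^2\Id$, all eigenvalues equal and positive.

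But wait — the cross term depends on $a$ through $\operatorname{Im}(\bar a_i a_j)$, which is \emph{not} invariant under the full unitary group $U(n)$ acting on $\C^n$; it is invariant only under the subgroup preserving the relevant structure, and the curvature tensor of $Q^n$ is invariant under the holonomy/isotropy action, which is $U(n)$ only up to the extra piece coming from $\SO(2)$. So I would instead argue directly: the bilinear form $\beta(b):=\sum_{i<j}\operatorname{Im}(\bar a_i a_j)\operatorname{Im}(b_i\bar b_j)$ on $\C^n$, viewed as a real bilinear form in the $2n$ real coordinates of $b$, is controlled by the rank-$2$ real matrix with entries $\operatorname{Im}(\bar a_i a_j)$. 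Writing $a=p+\sqrt{-1}q$ with $p,q\in\R^n$, one has $\operatorname{Im}(\bar a_i a_j)=p_iq_j-p_jq_i$, the $(i,j)$ entry of $pq^T-qp^T$. So $C$ as a real operator has rank at most $2$, hence at most two nonzero eigenvalues; since $C$ is the Hermitian part coming from a Kähler curvature operator, complex-linearity forces its nonzero eigenvalues to come in the right multiplicity, and in fact one shows directly that $b\mapsto\beta(b)$ vanishes unless $b$ has a component in $\operatorname{span}_\R\{p,q\}$, which is a complex subspace of dimension $\le 1$ when $p,q$ are parallel and dimension $\le 2$ otherwise. Combined with $|\beta(b)|\le C_0\|a\|^2\|b\|^2$ for an explicit $C_0$ making $4\|a\|^2-16\cdot(\text{eigenvalue})\ge 0$, this yields nonnegativity of all eigenvalues of $R(U,\bar U)$, with strict positivity except on a subspace of complex dimension at most one — i.e.\ at most one eigenvalue is zero.

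The main obstacle I anticipate is the bookkeeping in the last step: one must pin down exactly the two eigenvalues of the rank-$\le 2$ operator $C$ and check the sharp inequality $4\|a\|^2 \ge 16\,\mu$ for the larger eigenvalue $\mu$ of $C$, with equality attainable (so that one zero eigenvalue really occurs) but never violated. Concretely, after reducing to $a$ of the form where only $a_1,a_2$ are nonzero (which one \emph{can} do, since the isotropy group of $Q^n$ does act transitively enough on unit holomorphic tangent vectors), this becomes a completely explicit $2$-variable optimization: maximize $|\operatorname{Im}(\bar a_1 a_2)\operatorname{Im}(b_1\bar b_2)|$ subject to $|a_1|^2+|a_2|^2=\|b_1|^2+|b_2|^2=1$, and check the resulting bound against the ``$4$ vs.\ $16$'' coefficients in \Cref{eq:BisectionalCurvature}. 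Everything else — antisymmetry of $(\omega_{ij})$, the rank bound, the splitting $R(U,\bar U)=4\|a\|^2\Id-C$ — is routine once the formula \Cref{eq:BisectionalCurvature} is in hand.
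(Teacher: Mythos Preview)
Your outline is correct and, once the meandering is trimmed, lands on exactly the paper's argument: use the isotropy action of $\SO(n,\R)\times\SO(2,\R)$ to reduce to $a=(a_1,a_2,0,\dots,0)$, then check positivity of \Cref{eq:BisectionalCurvature} directly in the two remaining variables. Your false start with a full $U(n)$ change of basis is rightly abandoned, and your eventual reduction is the same one the paper uses (note: the isotropy does \emph{not} act transitively on unit holomorphic vectors---$Q^n$ has rank~$2$---but the $\SO(n,\R)$ factor does rotate the real $2$-plane $\operatorname{span}_\R\{p,q\}$ into $\operatorname{span}_\R\{e_1,e_2\}$, which is all you need).

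The one genuine addition in your sketch is the structural observation that $R(U,\bar U)=4\|a\|^2\Id-C$ with $C$ Hermitian of complex rank at most~$2$, supported on $\operatorname{span}_\C\{p,q\}$. The paper does not isolate this; it goes straight to the inequality chain. Your observation actually buys you something cleaner than either the paper's estimate or your proposed ``$2$-variable optimization'': after the reduction, $C$ restricted to $\operatorname{span}_\C\{e_1,e_2\}$ is the traceless Hermitian $2\times 2$ matrix with off-diagonal entry $8i\,\omega$, $\omega=\operatorname{Im}(\bar a_1a_2)$, hence has eigenvalues $\pm 8|\omega|$. The full eigenvalue list of $R(U,\bar U)$ is therefore $4\|a\|^2$ (multiplicity $n-2$), $4\|a\|^2+8|\omega|$, and $4\|a\|^2-8|\omega|$; since $|\omega|\le|a_1||a_2|\le\tfrac12\|a\|^2$, all are nonnegative and at most the last can vanish. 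This is sharper than what the paper writes down and dispenses with your anticipated ``bookkeeping obstacle'' entirely.
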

\begin{proof}
    Observe that the curvature is invariant under the adjoint action of the isotropy group $\SO(n,\R)\times\SO(2,\R)$. Therefore, we have 
    \begin{equation}\label{eq:InvarianceOfCurvature}
    R(BUA,B\Bar{U}A,BVA,B\Bar{V}A)=R(U,\Bar{U},V,\Bar{V}), \quad \forall A\in\SO(n,\R), B\in\SO(2,\R).
    \end{equation}
    Suppose that $U,V$ are nonzero holomorphic tangent vectors as in Definition \ref{lem:BisectionalCurvature}. By \ref{eq:InvarianceOfCurvature}, we can assume, without loss of generality, that $a_i$'s are all zero except for $a_1$ and $a_2$. Next we split into two cases:
    \begin{enumerate}
        \item If one of $a_1$ and $a_2$ is zero, then it follows from \Cref{eq:BisectionalCurvature} that 
        \[R(U,\Bar{U},V,\Bar{V})=4\left(\sum_{i=1}^n|a_i|^2\right)\left(\sum_{i=1}^n|b_i|^2\right)> 0.\]
        \item If both $a_1$ and $a_2$ are nonzero, then 
        \begin{align*}
            &R(U,\Bar{U},V,\Bar{V})\\
            =&4\left(\sum_{i=1}^n|a_i|^2\right)\left(\sum_{i=1}^n|b_i|^2\right)-8\sum_{i\ne j}^n\Imagine(\Bar{a_i}a_j)\Imagine(b_i\Bar{b_j})\\
            =&4\left(\sum_{i=1}^n|a_i|^2\right)\left(\sum_{i=1}^n|b_i|^2\right)-16\Imagine(\Bar{a_1}a_2)\Imagine(b_1\Bar{b_2})\\
            \ge&4\left(\sum_{i=1}^n|a_i|^2|b_i|^2+\sum_{i\ne j}|a_i|^2|b_j|^2\right)-16|a_1||a_2||b_1||b_2|\\
            =&4\left(\sum_{i=1}^2|a_i|^2|b_i|^2-2|a_1||b_1||a_2||b_2|\right)+4\left(\sum_{j\ne 1}|a_1|^2|b_j|^2+\sum_{j\ne 2}|a_2|^2|b_j|^2-2|a_1||b_1||a_2||b_2|\right)\\
            =&4(|a_1||b_1|-|a_2||b_2|)^2+(|a_1||b_2|-|a_2||b_1|)^2+4\left(\sum_{j\ne 1,2}|b_j|^2\right)(|a_1|^2+|a_2|^2)\\
            \ge&0.
        \end{align*}
        The equality holds if and only if 
        \begin{equation*}
            U=
            \begin{pmatrix}
                a&\sqrt{-1}a&\cdots&0\\
                -\sqrt{-1}a&a&\cdots&0
            \end{pmatrix},
            \quad
            V=\begin{pmatrix}
                b&-\sqrt{-1}b&\cdots&0\\
                -\sqrt{-1}b&-b&\cdots&0
            \end{pmatrix}, \quad \forall a,b\in\C^\times.
        \end{equation*}
    \end{enumerate}
    It follows from the above argument that $R$ has nonnegative and $2$-positive bisectional curvature.
\end{proof}

\section{The Yang-Mills flow on Riemann surfaces}\label{Sec:YangMills}

In this section, we present some fundamental results on the Yang Mills flow on Riemann surfaces, which are well known in literature (cf.\cite{BL81,AB83,Don83,Rd92,Das92}). We also establish a new maximum principle for the Yang-Mills flow in general as a corollary of Hamilton's maximum principle. The Hamilton's maximum principles is a standard and common tool in the field of the Ricci flow and there has been many successful applications in both general Ricci flow (cf.\cite{Ham86,BW08,BS09}) and K\"{a}hler-Ricci flow (cf.\cite{Che07,Ni07,NZ25}).

\subsection{Fundamentals of the Yang-Mills flow on Riemann surfaces}
Let $(M,g)$ be a compact K\"{a}hler manifold with K\"{a}hler form $\omega$ and $E$ be a rank $r$ complex vector bundle over $M$ with a Hermitian metric $h$. A \emph{unitary connection} on $(E,h)$ is a complex linear connection which is compatible with the Hermitian metric $h$. Let $D_A$ be the exterior derivative induced by a unitary connection $A$ and $D_A=D_A'+D_A''$ be its decomposition into holomorphic part $D_A'$ and conjugate holomorphic part $D_A''$. The connection is \emph{integrable} if $D_A''^2=0$. In this case, $D_A''$ induces a unique holomorphic structure on $E$ (cf.\cite[Proposition 1.4.17]{Kob87}).

We will only consider integrable unitary connections. For such a connection $A$ on $(E,h)$, the \emph{Yang-Mills energy} of $A$ is defined as the $L^2$ norm of the curvature form:
\begin{equation}
    \YM(A)=\int_M|F_A|^2\vol_g.
\end{equation}
The critical points of the Yang-Mills energy functional $\YM$ are called \emph{Yang-Mills connections}. A simple calculation of the first variation (cf.\cite[Theorem 2.21]{BL81}) shows that an integrable unitary connection $A$ is a Yang-Mills connection if and only if the following equation holds
\begin{equation}\label{eq:YangMillsEquation}
    D_A^*F_A=0.
\end{equation}
where $D_A^*$ is the formal adjoint to $D_A$ with respect to the $L^2$ inner product induced by $g$ and $h$.
\Cref{eq:YangMillsEquation} is called \emph{the Yang-Mills equation}.

The \emph{Yang-Mills flow} is the $L^2$ gradient flow of the Yang-Mills energy functional $\YM$. In other words, it is a time dependent connection $A(t)$ such that the following evolution equation holds:
\begin{equation}\label{eq:YangMillsFlow}
    \frac{\partial A}{\partial t}=-D_A^*F_A.
\end{equation}

The long time existence of the flow on compact K\"{a}hler manifolds was established by Donaldson (cf.\cite[Proposition 20]{Don85}):

\begin{thm}
    For any initial integrable and unitary connection $A$, the solution to the Yang-Mills flow \ref{eq:YangMillsFlow} exists for all $t\ge 0$.
\end{thm}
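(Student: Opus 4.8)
The plan is to use that $M$ is K\"{a}hler and the connection $A$ is integrable in order to convert the gauge-degenerate Yang--Mills flow into a genuinely parabolic equation --- the Donaldson heat flow for Hermitian metrics on a fixed holomorphic bundle --- and then derive the a priori estimates needed for global existence. This should be contrasted with the general Riemannian situation, where the Yang--Mills flow can develop finite-time singularities in dimension $\ge 5$; it is precisely the integrable K\"{a}hler structure that forces long-time existence here.

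First I would record the algebraic consequence of the K\"{a}hler condition. Fix the holomorphic structure $\bar\partial_E$ on $E$ induced by $D_A''$. The K\"{a}hler identities, together with the Bianchi identity $D_B F_B=0$, give for every integrable unitary connection $B$ on $(E,\bar\partial_E)$ an identity of the schematic form $D_B^*F_B=\sqrt{-1}\,(\bar\partial_B-\partial_B)\Lambda_\omega F_B$. Hence \eqref{eq:YangMillsFlow} becomes $\partial_t A=-\sqrt{-1}(\bar\partial_A-\partial_A)\Lambda_\omega F_A$, whose $(0,1)$-component $\partial_t D_A''=-\sqrt{-1}\,\bar\partial_A(\Lambda_\omega F_A)$ is an infinitesimal \emph{complex} gauge transformation. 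Therefore, modulo a time-dependent gauge transformation, the holomorphic structure may be kept fixed at $\bar\partial_E$ and only the Hermitian metric $h=h(t)$ evolves; a direct computation identifies the resulting evolution with the Donaldson heat flow
\[
h^{-1}\frac{\partial h}{\partial t}=-2\sqrt{-1}\,\Lambda_\omega F_h .
\]
Conversely, a solution $h(t)$ of this heat flow produces, via the inverse gauge transformation, a solution of \eqref{eq:YangMillsFlow}. Since gauge transformations do not affect existence, it suffices to show that the Donaldson heat flow exists for all $t\ge 0$.

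Next come the parabolic estimates. The Donaldson heat flow is strictly parabolic in $h$ --- its linearization is $\partial_t-\Delta_\omega$ acting on $h$-self-adjoint endomorphisms --- so short-time existence and uniqueness are standard. For long time, the crux is a $C^0$ bound preventing $h$ from degenerating relative to the initial metric $h_0$ on any finite interval $[0,T]$: one shows that $\tr(h_0^{-1}h)$ and $\tr(h^{-1}h_0)$ satisfy differential inequalities $(\partial_t-\Delta_\omega)(\,\cdot\,)\le C$ with $C$ controlled by $\sup_M|\Lambda_\omega F_{h_0}|$, and the maximum principle then bounds them, hence bounds the $h_0$-self-adjoint endomorphism $\log(h_0^{-1}h)$ in $C^0$. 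Coupled with the maximum-principle inequality $(\partial_t-\Delta_\omega)|\Lambda_\omega F_h|^2\le 0$, which keeps $|\Lambda_\omega F_h|$ bounded by its value at $t=0$, and the K\"{a}hler identity expressing $\|F_h\|_{L^2}^2$ in terms of $\|\Lambda_\omega F_h\|_{L^2}^2$ up to a topological constant, one gets a uniform bound on the full curvature. Feeding all of this into local parabolic regularity and Schauder estimates for the now uniformly parabolic equation for $h$ yields uniform $C^\infty$ bounds on $h$ over $[0,T]$ for every finite $T$, which rules out finite-time blow-up and gives global existence of the Donaldson heat flow, hence of \eqref{eq:YangMillsFlow}.

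The $C^0$ estimate on the evolving metric is the step I expect to be the main obstacle. The curvature quantity $\Lambda_\omega F_h$ is controlled directly by a maximum-principle inequality, but a priori an eigenvalue of $h_0^{-1}h$ could escape to $0$ or $\infty$ in finite time; ruling this out uses integrability and the particular structure of Donaldson's flow, and is the heart of the argument in \cite{Don85}. I also note that the genuinely global behaviour of the flow --- convergence as $t\to\infty$ and its link with the Hitchin--Kobayashi correspondence --- lies deeper than the long-time existence stated here and will not be needed in the sequel.
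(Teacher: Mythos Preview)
Your outline is a faithful sketch of Donaldson's argument in \cite{Don85}, but note that the paper itself does not supply a proof of this theorem at all: it simply quotes the statement and attributes it to \cite[Proposition 20]{Don85}. So there is no ``paper's own proof'' to compare against; your proposal goes well beyond what the paper does by actually recalling the mechanism (conversion to the Hermitian metric flow via the K\"{a}hler identities, the maximum-principle bound on $|\Lambda_\omega F_h|$, the $C^0$ control of $h_0^{-1}h$, and parabolic bootstrapping).

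One small comment on the ordering of your estimates: the differential inequalities for $\tr(h_0^{-1}h)$ and $\tr(h^{-1}h_0)$ involve the \emph{evolving} mean curvature $\Lambda_\omega F_h$, not just $\Lambda_\omega F_{h_0}$, so logically the sup bound on $|\Lambda_\omega F_h|$ (from $(\partial_t-\Delta)|\Lambda_\omega F_h|^2\le 0$) should come first and then feed into the $C^0$ estimate on the metric. You state both ingredients, but the dependence is slightly cleaner if presented in that order. Otherwise your plan is correct and is exactly the route Donaldson takes.
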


If we specialize to the case when the base K\"{a}hler manifold is one dimensional, namely, a Riemann surface, the following convergence result is known (cf.\cite{Rd92,Das92}):

\begin{thm}\label{thm:ConvergenceTheorem}
    Let $(M,g)$ be a Riemann surface and $(E,h)$ be a Hermitian complex vector bundle. If $A(t)$ is a solution to the Yang-Mills flow. Then $A(t)$ converges to a Yang-Mills connection $A_\infty$ smoothly as $t\to\infty$.
\end{thm}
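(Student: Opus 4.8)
The plan is to deduce convergence from three ingredients: the energy dissipation identity together with the subcritical nature of the flow in real dimension $2$ (which yields subsequential convergence), a {\L}ojasiewicz--Simon gradient inequality at the subsequential limit, and a standard ODE argument promoting this to genuine convergence.

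First I would record the energy identity along the flow \eqref{eq:YangMillsFlow}: the first-variation formula gives $\frac{d}{dt}\YM(A(t)) = -2\int_M|D_A^*F_A|^2\,\vol_g$, so $\YM(A(t))$ is nonincreasing and converges to some $E_\infty\ge 0$, and $\int_0^\infty\|D_A^*F_A(t)\|_{L^2}^2\,dt<\infty$. Hence there is a sequence $t_k\to\infty$ along which $\|D_A^*F_A(t_k)\|_{L^2}\to 0$. Because the base is a Riemann surface the Yang--Mills flow is subcritical and there is no curvature concentration (bubbling occurs only in real dimension $\ge 4$); combining the long-time existence theorem with Uhlenbeck's weak compactness and Bochner-type parabolic estimates on $F_A$ shows that $\{A(t):t\ge 1\}$ is bounded in $C^\infty$ modulo gauge. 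After passing to a subsequence and applying time-independent gauge transformations, $A(t_k)\to A_\infty$ in $C^\infty$, and $D_{A_\infty}^*F_{A_\infty}=0$; that is, $A_\infty$ is a Yang--Mills connection (equivalently, on a surface, $*F_{A_\infty}$ is $D_{A_\infty}$-parallel).

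Next I would invoke the infinite-dimensional {\L}ojasiewicz--Simon machinery. The functional $A\mapsto\YM(A)$ is real-analytic on the affine space of connections and its $L^2$-gradient $D_A^*F_A$ is an elliptic operator transverse to the gauge orbit, the degeneracy along the orbit being finite-dimensional locally and handled by working in Coulomb gauge relative to $A_\infty$. This produces constants $c>0$, $\sigma>0$ and $\theta\in(0,\tfrac12]$ such that
\[
\|D_A^*F_A\|_{L^2}\ \ge\ c\,\bigl|\YM(A)-E_\infty\bigr|^{1-\theta}
\]
for every connection $A$ within $L^2_1$-distance $\sigma$ of $A_\infty$ modulo gauge. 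Granting this, set $f(t)=\YM(A(t))-E_\infty\ge 0$; then $-\frac{d}{dt}f(t)^\theta = 2\theta f(t)^{\theta-1}\|D_A^*F_A\|_{L^2}^2 \ge 2\theta c\,\|D_A^*F_A\|_{L^2} = 2\theta c\,\|\partial_t A\|_{L^2}$ as long as $A(t)$ remains in the $\sigma$-neighborhood. A continuity/bootstrap argument — using that $A(t_k)$ eventually enters the neighborhood while the length bound just derived keeps the flow inside it — shows $A(t)$ stays in the neighborhood for all large $t$, hence $\int_{t_0}^\infty\|\partial_t A\|_{L^2}\,dt\le \frac{1}{2\theta c}f(t_0)^\theta<\infty$. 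Therefore $A(t)$ is $L^2$-Cauchy and converges; parabolic smoothing upgrades the convergence to $C^\infty$, and the limit is Yang--Mills.

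The main obstacle is the {\L}ojasiewicz--Simon inequality of the third paragraph: establishing the analyticity of $\YM$, the Fredholm property of its Hessian, and the compatibility of the gauge slice with the gradient flow is the technical heart of the argument (this is exactly what is carried out in the references cited for \Cref{thm:ConvergenceTheorem}). The remaining pieces — the energy dissipation identity, the absence of bubbling in real dimension $2$, and the ODE argument — are comparatively routine once the long-time existence theorem is in hand. One could bypass {\L}ojasiewicz in the lucky situation where the Yang--Mills connections are isolated modulo gauge, but in general they move in positive-dimensional moduli of projectively flat connections, so the gradient inequality is genuinely needed.
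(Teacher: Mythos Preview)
The paper does not actually prove \Cref{thm:ConvergenceTheorem}; it is quoted without proof from the literature, with references to R\aa de \cite{Rd92} and Daskalopoulos \cite{Das92}. Your outline is essentially R\aa de's argument: energy monotonicity plus the subcritical parabolic estimates available in real dimension~$2$ give uniform $C^\infty$ bounds modulo gauge and hence subsequential convergence to a Yang--Mills connection, and then the {\L}ojasiewicz--Simon gradient inequality in a Coulomb slice about that limit promotes this to full convergence via the standard length estimate. As a sketch it is sound, and you have correctly identified where the work lies (the analytic setup of the {\L}ojasiewicz--Simon inequality in the presence of gauge). One point to handle with a little more care in a complete write-up is the interaction between the gauge transformations used to extract the subsequential limit and the {\L}ojasiewicz neighborhood: one must either show the flow itself, not just a gauge-equivalent representative, enters and remains in the slice neighborhood, or else argue that the length bound is gauge-invariant. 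Daskalopoulos's paper \cite{Das92} reaches the same conclusion by a somewhat different route, exploiting the Harder--Narasimhan stratification of the space of holomorphic structures, so your choice of the analytic ({\L}ojasiewicz) path over the algebro-geometric one is one of the two standard options.
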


\subsection{A maximum principle for Yang-Mills flow}

The curvature form $F_A$ is an skew-Hermitian endomorphism valued $(1,1)$ form. In other words, it is a section of the bundle $\Omega^2(\Li{u}(E))\subset\Omega^2(\End E)$. Let $\Lambda_\omega$ be the contraction with the K\"{a}hler form $\omega$. Then $\sqrt{-1}\Lambda_\omega F_A$ is a section of the vector bundle $\sqrt{-1}\Li{u}(E)$ consisting of Hermitian endomorphisms. We establish a tensor maximum principle for $\sqrt{-1}\Lambda_\omega F_A$ along the Yang-Mills flow in this subsection. 

Firstly, the evolution equation of the curvature form $F_A$ can be calculated from the Yang-Mills flow \ref{eq:YangMillsFlow}:
\begin{lem}
    The curvature form $F_A$ evolves by the following equation:
    \begin{equation}
        \frac{\partial F_A}{\partial t}=-\Delta_{D_A}F_A.
    \end{equation}
    where $\Delta_A=D_AD_A^*+D_A^*D_A$ is the Hodge Laplacian on $\End E$ valued forms.
\end{lem}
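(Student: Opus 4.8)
The plan is to differentiate the identity $F_A = D_A A$ (more precisely, the variation formula for curvature under a change of connection) along the flow and then simplify using the Bianchi identities. First I would recall that if $A(t)$ is a time-dependent connection with $\dot A = a(t)\in\Omega^1(\Li{u}(E))$, then the curvature varies by $\dot F_A = D_A a$. Applying this to the Yang-Mills flow \eqref{eq:YangMillsFlow}, where $a = -D_A^* F_A$, gives
\[
\frac{\partial F_A}{\partial t} = -D_A D_A^* F_A.
\]
So the statement amounts to showing that on the curvature form itself one has $D_A D_A^* F_A = \Delta_{D_A} F_A = (D_A D_A^* + D_A^* D_A) F_A$, i.e. that the extra term $D_A^* D_A F_A$ vanishes. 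But this is exactly the (differential) second Bianchi identity: $D_A F_A = 0$ as a section of $\Omega^3(\End E)$, hence $D_A^* D_A F_A = 0$ as well. Substituting back yields the claimed evolution equation.

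The main steps, in order, are: (i) establish the first-variation formula $\delta F_A = D_A(\delta A)$ for the curvature of an integrable unitary connection — this is a standard computation expanding $F_{A+ta} = F_A + t\,D_A a + \tfrac{t^2}{2}[a\wedge a]$ and reading off the linear term; (ii) plug in $\delta A = \tfrac{\partial A}{\partial t} = -D_A^* F_A$ from \eqref{eq:YangMillsFlow} to obtain $\tfrac{\partial F_A}{\partial t} = -D_A D_A^* F_A$; (iii) invoke the Bianchi identity $D_A F_A = 0$, which kills the term $D_A^* D_A F_A$; (iv) add that vanishing term to rewrite $-D_A D_A^* F_A = -(D_A D_A^* + D_A^* D_A)F_A = -\Delta_{D_A} F_A$, giving the result.

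I expect there to be essentially no serious obstacle here: the proof is a short formal computation, and both ingredients (the linearization of curvature and the second Bianchi identity) are classical. The one point that deserves a line of care is the bookkeeping of signs and the fact that $D_A$ acts on $\End E$-valued forms via the induced connection, so that $D_A^*$ and $\Delta_{D_A}$ are the operators built from that induced connection; once this is fixed the identity $D_A F_A = 0$ applies verbatim. A remark worth adding is that integrability of $A$ is used only insofar as it is preserved along the Yang-Mills flow (so that the flow stays within the class of connections under consideration); the evolution equation for $F_A$ itself holds for any unitary connection.
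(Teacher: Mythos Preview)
Your proposal is correct and follows essentially the same route as the paper: the paper writes the variation formula $\dot F_A = D_A\dot A$ out explicitly in a local unitary frame (as $d(\dot A)+\dot A\wedge A+A\wedge\dot A$), substitutes $\dot A=-D_A^*F_A$, and then invokes the Bianchi identity $D_AF_A=0$ to upgrade $-D_AD_A^*F_A$ to $-\Delta_{D_A}F_A$. Your invariant phrasing and the paper's local computation are the same argument.
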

\begin{proof}
    We work locally. Choose a local unitary frame $\{\xi_\alpha\}$ of $E$ with respect to the fixed metric $h$. Then we may identify the connection form $A$ as a matrix valued 1 form and $F_A$ as a matrix valued 2 form. The Yang-Mills equation is then equivalent to 
    \[\frac{\partial A}{\partial t}=-d^*F_A\]
    where the RHS means that $-d^*$ acts on each entry of $F_A$. Since $F_A=dA+A\wedge A$, we deduce that
    \begin{align*}
        \frac{\partial F_A}{\partial t}& =d\left(\frac{\partial A}{\partial t}\right)+\frac{\partial A}{\partial t}\wedge A+A\wedge\frac{\partial A}{\partial t}\\
        & =-dd^*F_A-(d^*F_A)\wedge A-A\wedge(d^*F_A)\\
        & =-D_A(D_A^*F_A)=-\Delta_{D_A}F_A.
    \end{align*}
    Note that the last equality follows from the Bianchi identity $D_AF_A=0$.
\end{proof}

It follows from the K\"{a}hler identity $[\Delta_A,\Lambda_\omega]=\Delta_A\Lambda_\omega-\Lambda_\omega\Delta_A=0$ that the evolution equation of $\sqrt{-1}\Lambda_\omega F_A$ is:
\begin{equation}\label{eq:EvolutionContractedCurvature}
    \frac{\partial}{\partial t}\sqrt{-1}\Lambda_\omega F_A=-\Delta_A (\sqrt{-1}\Lambda_\omega F_A)=\Delta(\sqrt{-1}\Lambda_\omega F_A).
\end{equation}
where $\Delta=g^{i\Bar{j}}\nabla_i\nabla_{\Bar{j}}$ is the Laplace-Beltrami operator on $\End E$ and $\nabla$ is the connection induced by $A(t)$.

Recall the following general maximum principle for sections of vector bundles by R. Hamilton:
\begin{thm}[\text{\cite[Lemma 8.1]{Ham86}}]\label{thm:HamiltonMaximumPrinciple}
    Let $M$ be a closed Riemannian manifold with time dependent metric $g(t)$ and $E$ be a vector bundle over $M$ with a fixed metric $h$. Suppose that $\nabla(t)$ is a time dependent linear connection on $E$ which is compatible with $h$. Let $f$ be a section of $E$ satisfying the parabolic evolution equation
    \begin{equation}\label{eq:ParabolicEvolutionEquation}
        \frac{\partial f}{\partial t}=\Delta f+\phi(f)
    \end{equation}
    where $\Delta=g^{ij}\nabla_i\nabla_j$ is the Laplacian induced by $g(t)$ and $\nabla(t)$. For a subset $C$ of $E$ which is invariant under parallel translation, let $C_x=C\cap E_x$. If $C_x$ is closed, convex and preserved by the ODE 
    \begin{equation}
        \frac{df}{dt}=\phi(f),
    \end{equation}
    then $C$ is preserved by the flow \ref{eq:ParabolicEvolutionEquation}. Namely, if $f_x\in C_x, \forall x\in M$ at $t=0$, then it remains so for all $t\ge 0$. Moreover, if $f_x$ is in the interior of $C_x$ at some point $x\in M$ and $t=0$, then $f_x$ is in the interior of $C_x$ for any $x\in M$ all $t>0$.
\end{thm}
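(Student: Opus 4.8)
The plan is to reduce the whole statement to the classical scalar parabolic maximum principle on $M$ by measuring the $h$-distance of $f$ from the convex fibers of $C$. The quantity to track is
\[
\Phi(x,t)\;=\;\tfrac12\bigl(\operatorname{dist}_h(f(x,t),C_x)\bigr)^2,
\]
which is well defined and non-negative because each $C_x\subset E_x$ is closed and convex; write $\pi_x\colon E_x\to C_x$ for the (single-valued) nearest-point projection, so that the fiberwise gradient of $v\mapsto\tfrac12\operatorname{dist}_h(v,C_x)^2$ equals $v-\pi_x(v)$ and its fiberwise Hessian is non-negative (abbreviate $\pi(f)=\pi_x(f(x,t))$ and write $\langle\cdot,\cdot\rangle=h(\cdot,\cdot)$). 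I would aim to establish a differential inequality $\partial_t\Phi\le\Delta\Phi+K\Phi$ on each finite time interval $[0,T]$; then, since $\Phi\ge 0$ and $\Phi(\cdot,0)\equiv 0$, the scalar maximum principle (applied to $e^{-Kt}\Phi$) forces $\Phi\equiv 0$, i.e.\ $f(x,t)\in C_x$ for all $x$ and all $t\ge 0$.

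The inequality uses each of the three hypotheses on $C$ exactly once. \emph{Parallel invariance} is used to compute $\Delta\Phi$: because $C$ is parallel, the fiberwise function $v\mapsto\tfrac12\operatorname{dist}_h(v,C_x)^2$ is covariantly constant (its value is unchanged by parallel transport, since $C_x$ is), so the connection-Laplacian chain rule holds with no lower-order corrections,
\[
\Delta\Phi\;=\;\langle f-\pi(f),\Delta f\rangle\;+\;\sum_{k}\operatorname{Hess}\!\bigl(\tfrac12\operatorname{dist}_h(\cdot,C_x)^2\bigr)(f)\bigl(\nabla_k f,\nabla_k f\bigr),
\]
the sum being over a $g(t)$-orthonormal frame of $TM$ (one checks this cleanly in a radial parallel frame, where the Christoffel and bundle-curvature terms vanish at the base point). \emph{Convexity} of $C_x$ makes that Hessian term $\ge 0$, so $\Delta\Phi\ge\langle f-\pi(f),\Delta f\rangle$; together with $\partial_t\Phi=\langle f-\pi(f),\partial_t f\rangle=\langle f-\pi(f),\Delta f+\phi(f)\rangle$ this gives $\partial_t\Phi-\Delta\Phi\le\langle f-\pi(f),\phi(f)\rangle$. \emph{ODE-invariance} of $C_x$ is equivalent, by Nagumo's flow-invariance theorem, to the subtangency condition $\phi(q)\in T_{C_x}(q)$ for every $q\in C_x$; applying it at $q=\pi_x(f)$, for which $f-q$ lies in the outward normal cone $N_{C_x}(q)=T_{C_x}(q)^{\circ}$, gives $\langle f-\pi(f),\phi(\pi(f))\rangle\le 0$, whence $\langle f-\pi(f),\phi(f)\rangle\le\langle f-\pi(f),\phi(f)-\phi(\pi(f))\rangle\le L\,|f-\pi(f)|^2=2L\,\Phi$, where $L$ is a Lipschitz bound for $\phi$ on the compact region swept out by $f$ over $M\times[0,T]$. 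This yields $\partial_t\Phi\le\Delta\Phi+2L\,\Phi$, as required.

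For the interior assertion I would run the same machinery on the inner distance $\Psi(x,t)=\operatorname{dist}_h(f(x,t),\partial C_x)$: assuming the first part, $\Psi\ge 0$, and convexity of $C_x$ makes $\Psi$ \emph{concave} in the fiber variable, which reverses the Hessian sign; parallel invariance again removes the lower-order terms, and the subtangency condition again controls the zeroth-order term, now with the opposite sign, producing the supersolution inequality $\partial_t\Psi\ge\Delta\Psi-K\Psi$. If $\Psi$ vanished at some $(x_1,t_1)$ with $t_1>0$, the strong minimum principle for non-negative supersolutions would force $\Psi\equiv 0$ on $M\times[0,t_1]$, contradicting $\Psi(x_0,0)>0$; hence $\Psi>0$ throughout $M\times(0,\infty)$, which is exactly the claim. (An alternative, closer to Hamilton's original treatment, is to first prove a strict version of the whole theorem for the perturbed reaction term $\phi+\varepsilon N$, with $N$ pointing strictly into $C$, by a first-time-of-contact argument, and then let $\varepsilon\to 0$.)

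The main obstacle is regularity rather than any single estimate: the functions $\tfrac12\operatorname{dist}_h(\cdot,C_x)^2$ and $\operatorname{dist}_h(\cdot,\partial C_x)$ are only $C^{1,1}$ — semiconvex and semiconcave respectively — so the chain-rule computations and the parabolic inequalities they feed into must be interpreted in the barrier/support sense (equivalently, one smoothly approximates $C$ by a nested family of strictly convex bodies and passes to the limit), and the scalar maximum and strong minimum principles must be quoted in their versions for such sub/supersolutions. The only other non-routine input is the convex-analytic flow-invariance lemma that converts ``the ODE preserves the closed convex set $C_x$'' into the pointwise subtangency condition on $\phi$ — this is precisely where the hypothesis on $\phi$ is consumed. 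The time dependence of $g(t)$ and of $\nabla(t)$ causes no genuine difficulty, since $\Phi$ and $\Psi$ depend only on the fixed bundle metric $h$ and every computation is carried out at a single frozen time.
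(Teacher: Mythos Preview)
The paper does not prove this statement at all: it is quoted verbatim as \cite[Lemma 8.1]{Ham86} and used as a black box to derive Corollary~\ref{cor:MaximumPrincipleYangMills}. So there is no ``paper's own proof'' to compare against.

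Your proposal is a correct outline, and it follows the distance-function route rather than Hamilton's original first-time-of-contact argument (the alternative you mention in your last parenthetical). Both are standard; the distance-function version appears, for instance, in the Ricci flow textbook literature. The three hypotheses are indeed consumed exactly where you say: parallel invariance makes the fiberwise function $v\mapsto\tfrac12\operatorname{dist}_h(v,C_x)^2$ descend to a well-defined function on $E$ that is constant along horizontal lifts, so the bundle chain rule has no curvature correction; convexity of $C_x$ gives the sign on the Hessian term; and Nagumo/subtangency converts ODE-invariance into the pointwise inequality $\langle f-\pi(f),\phi(\pi(f))\rangle\le 0$. The resulting $\partial_t\Phi\le\Delta\Phi+2L\Phi$ with $\Phi(\cdot,0)\equiv 0$ finishes the first part. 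For the ``moreover'' clause, your use of the concave inner distance $\Psi$ and the strong minimum principle is also correct; the sign flips because $-(f-q)/|f-q|$ is now an \emph{outward} normal at the nearest boundary point $q$, so subtangency gives $\langle (f-q)/|f-q|,\phi(q)\rangle\ge 0$ and hence $\partial_t\Psi\ge\Delta\Psi-L\Psi$.

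The only genuine technical burden is the one you already flag: $\tfrac12\operatorname{dist}^2(\cdot,C_x)$ is merely $C^{1,1}$ and $\operatorname{dist}(\cdot,\partial C_x)$ is merely concave, so the second-order inequalities must be read in the support/barrier sense (or one approximates $C$ by smooth strictly convex bodies and passes to the limit). Provided you carry that out --- and quote the scalar weak/strong maximum principles in their viscosity or support-function forms --- the argument goes through. Compared with Hamilton's perturbation argument, your route trades the somewhat ad hoc choice of an inward-pointing vector field $N$ for the (equally standard) convex-analytic machinery around the Moreau--Yosida projection; neither is shorter, but yours makes the role of each hypothesis more transparent.
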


For any section $f\in\sqrt{-1}\Li{u}(E)$, its value $f_x$ at $x\in M$ is Hermitian and hence orthogonally diagonolizable with real eigenvalues $\lambda_1(f_x)\leq\cdots\leq\lambda_r(f_x)$. We say that 
\begin{enumerate}
    \item $f$ is $2$-nonnegative if $\lambda_1(f_x)+\lambda_2(f_x)\ge 0$ for all $x\in M$;
    \item $f$ is $2$-positive if $\lambda_{x,1}(f)+\lambda_{2,x}(f)>0$ for all $x\in M$;
    \item $f$ is $2$-quasi-positive if $\lambda_1(f_x)+\lambda_2(f_x)\ge 0$ for all $x\in M$ and the inequality is strict at one point of $M$;
    \item For $\varepsilon>0$, $f$ is $\varepsilon$ $2-$positive if $\lambda_1(f_x)+\lambda_2(f_x)\ge\varepsilon$ for all $x\in M$.
\end{enumerate}  

By applying the above general maximum principle \ref{thm:HamiltonMaximumPrinciple}, we obtain the following maximum principle for the Yang-Mills flow:
\begin{cor}\label{cor:MaximumPrincipleYangMills}
    Under the Yang-Mills flow \ref{eq:YangMillsFlow}, the following hold:
    \begin{enumerate}
        \item if $\sqrt{-1}\Lambda_\omega F_A$ is $\varepsilon$ $2-$positive at $t=0$, then it remains so for all $t\ge0$;
        \item If $\sqrt{-1}\Lambda_\omega F_A$ is quasi $2-$positive at $t=0$, then it is $2-$positive for all $t>0$.
    \end{enumerate} 
\end{cor}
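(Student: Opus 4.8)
The plan is to deduce \Cref{cor:MaximumPrincipleYangMills} as a direct application of Hamilton's tensor maximum principle \Cref{thm:HamiltonMaximumPrinciple} to the section $f=\sqrt{-1}\Lambda_\omega F_A$ of the real vector bundle $\sqrt{-1}\Li{u}(E)\subset\End E$. Along the Yang--Mills flow the Hermitian metric $h$ on $E$ stays fixed, hence so does the induced fibre metric $\langle S,T\rangle=\tr(ST^{*})$ on $\End E$; and since each $A(t)$ is a unitary connection, the connection it induces on $\End E$ is compatible with this metric and preserves the subbundle $\sqrt{-1}\Li{u}(E)$ of Hermitian endomorphisms, so \Cref{thm:HamiltonMaximumPrinciple} is applicable with $\sqrt{-1}\Li{u}(E)$ in the role of the bundle there. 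By the evolution equation \eqref{eq:EvolutionContractedCurvature}, $f$ satisfies $\partial_t f=\Delta f$, which is \eqref{eq:ParabolicEvolutionEquation} with reaction term $\phi\equiv0$ --- strictly, the operator $\Delta=g^{i\bar j}\nabla_i\nabla_{\bar j}$ appearing in \eqref{eq:EvolutionContractedCurvature} differs from the connection Laplacian only by a zeroth-order commutator term with $\sqrt{-1}\Lambda_\omega F_A$, which annihilates $f$ itself, so taking $\phi\equiv0$ is legitimate. In particular the associated ODE $\dot f=\phi(f)$ is just $\dot f=0$, which trivially preserves every fibrewise subset.

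Next I would choose the parallel-invariant set $C=\bigcup_{x\in M}C_x$: for part (1) take $C_x=\{\xi\in\sqrt{-1}\Li{u}(E_x):\lambda_1(\xi)+\lambda_2(\xi)\ge\varepsilon\}$, and for part (2) the same set with $\varepsilon=0$. Invariance of $C$ under parallel translation is immediate, since unitary parallel transport is a fibrewise isometry and therefore leaves the eigenvalues of a Hermitian endomorphism unchanged; and each $C_x$ is closed since the eigenvalues depend continuously on $\xi$. The substantive point is the convexity of $C_x$, equivalently the concavity of $\xi\mapsto\lambda_1(\xi)+\lambda_2(\xi)$, which I would obtain from the min-characterization
\begin{equation*}
    \lambda_1(\xi)+\lambda_2(\xi)=\min\Bigl\{\langle\xi v_1,v_1\rangle+\langle\xi v_2,v_2\rangle:\ v_1,v_2\in E_x,\ \langle v_i,v_j\rangle=\delta_{ij}\Bigr\},
\end{equation*}
which displays $\lambda_1+\lambda_2$ as an infimum of linear functionals of $\xi$ (this is the Ky Fan principle that the sum of the smallest $k$ eigenvalues of a Hermitian matrix is a concave function of the matrix). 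With closedness, convexity, parallel invariance, and ODE-invariance all checked, \Cref{thm:HamiltonMaximumPrinciple} gives that $C$ is preserved by the flow, which is exactly statement (1).

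For part (2) I would further invoke the final ``interior'' assertion of \Cref{thm:HamiltonMaximumPrinciple}. If $\sqrt{-1}\Lambda_\omega F_A$ is $2$-quasi-positive at $t=0$, then with $C_x=\{\lambda_1+\lambda_2\ge0\}$ we have $f_x\in C_x$ for every $x$, while $f_{x_0}\in\operatorname{int}C_{x_0}$ at the point $x_0$ where $\lambda_1+\lambda_2>0$; hence $f_x\in\operatorname{int}C_x$ for all $x$ and all $t>0$. It then remains only to identify $\operatorname{int}C_x$ with the strictly $2$-positive cone $\{\xi:\lambda_1(\xi)+\lambda_2(\xi)>0\}$: the inclusion $\supseteq$ is continuity of the eigenvalues, while for $\subseteq$ one notes that if $\lambda_1(\xi)+\lambda_2(\xi)=0$ then $\xi-t\,\Id\notin C_x$ for every $t>0$ (here $r=\operatorname{rank}E\ge2$ is used), so $\xi\in\partial C_x$. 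This yields $2$-positivity of $\sqrt{-1}\Lambda_\omega F_A$ for all $t>0$.

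I expect the one real obstacle to be the convexity of $C_x$, i.e.\ the concavity of $\xi\mapsto\lambda_1(\xi)+\lambda_2(\xi)$ on Hermitian endomorphisms; the remaining ingredients --- parallel invariance, closedness, the harmlessness of the reaction term, and the identification of the interior --- are all routine once \eqref{eq:EvolutionContractedCurvature} is in hand.
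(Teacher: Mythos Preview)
Your proposal is correct and follows essentially the same route as the paper: both apply \Cref{thm:HamiltonMaximumPrinciple} to $f=\sqrt{-1}\Lambda_\omega F_A$ with $\phi\equiv 0$, verify convexity of $C_x$ via the variational formula $\lambda_1(\xi)+\lambda_2(\xi)=\inf_{\langle v_i,v_j\rangle=\delta_{ij}}\bigl(\langle\xi v_1,v_1\rangle+\langle\xi v_2,v_2\rangle\bigr)$, and obtain (2) from the interior clause. The only cosmetic difference is that the paper phrases parallel invariance via the associated-bundle description $\sqrt{-1}\Li{u}(E)\cong P\times_\rho\sqrt{-1}\Li{u}(r)$ and $U(r)$-invariance of $C_\varepsilon$, whereas you argue directly that unitary parallel transport preserves eigenvalues; your extra remarks on matching the Laplacian in \eqref{eq:EvolutionContractedCurvature} with the one in \Cref{thm:HamiltonMaximumPrinciple} and on identifying $\operatorname{int}C_x$ with the strict cone are welcome clarifications that the paper leaves implicit.
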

\begin{proof}
    For (1), let $C_{\varepsilon}(E)$ be the subset of $\sqrt{-1}\Li{u}(E)$ consists of $\varepsilon$ $2-$positive Hermitian endomorphisms. We use the language of principal bundles (cf.\cite[Chapter II]{KN63}). Let $P$ be the unitary frame bundle of $(E,h)$, which is a principal $U(r)$ bundle over $M$. Let $\sqrt{-1}\Li{u}(r)$ be the vector space of all Hermitian matrices. The standard representation of $U(r)$ on $\C^r$ induces a representation $\rho$ of $U(r)$ on $\sqrt{-1}\Li{u}(r)$. Then there is the isomorphism
    \begin{equation}\label{eq:Isomorphism}
        \sqrt{-1}\Li{u}(E)\cong P\times_\rho(\sqrt{-1}\Li{u}(r))
    \end{equation} 
    where $P\times_\rho(\sqrt{-1}\Li{u}(r))$ is the associated vector bundle of $P$ with respect to the representation $\rho$. Define $C_\varepsilon$ to be the subset of $\sqrt{-1}\Li{u}(r)$ consisting of all $\varepsilon$ $2-$positive Hermitian matrices. Then under the isomorphism $\ref{eq:Isomorphism}$,
    \begin{equation}
        C_\varepsilon(E)\cong P\times_\rho C_\varepsilon.
    \end{equation}
    Since $C_\varepsilon$ is closed, so is $C_\varepsilon(E)$. Moreover, $C_\varepsilon(E)$ is invariant under the parallel translation because $C_\varepsilon$ is $U(r)$ invariant. It remains to show that $C_\varepsilon(E)_x$ is convex for each $x\in M$. This is equivalent to the convexity of $C_\varepsilon$. Observe that for any $U\in \sqrt{-1}\Li{u}(r)$ with eigenvalues $\lambda_1(U)\leq\cdots\leq\lambda_r(U)$,
    \[\lambda_1(U)+\lambda_2(U)=\inf_{\substack{\langle v_1,v_2\rangle=0\\|v_1|=|v_2|=1}} \langle Uv_1,v_1\rangle+\langle Uv_2,v_2\rangle.\]
    It follows easily that $\lambda_1+\lambda_2$ is a convex function on $\sqrt{-1}\Li{u}(r)$ and thus $C_\varepsilon$ is convex. By combining the evolution equation \ref{eq:EvolutionContractedCurvature} and the maximum principle \ref{thm:HamiltonMaximumPrinciple}, we deduce (1).

    (2) follows directly from the same argument as in (1) and the last statement in the maximum principle \Cref{thm:HamiltonMaximumPrinciple}.
\end{proof}

\section{Proof of the main theorem}\label{Sec:Proof}

In this section we prove the main theorem \Cref{thm:MainTheorem}. Firstly, we recall the notion of the pseudoindex of Fano manifolds, which is the minimal anti-canonical degree of rational curves:
\begin{defn}[pseudoindex]
    The \emph{pseudoindex} $i(M)$ of a Fano manifold $M$ is 
    \begin{equation}
        i(M)=\inf\{-K_M\cdot C:\text{ $C$ is a rational curve on $M$}\}.
    \end{equation}
\end{defn}

In algebraic geometry, the pseudoindex plays an important role on the classification of Fano manifolds (cf.\cite{CMSB02,Miy04,DH17}). In particular, we shall utilize the following theorem:
\begin{thm}[\cite{Miy04,DH17}]
    If $M^n$ is an $n$ dimensional Fano manifold and $i(M)\ge n$, then $M$ is biholomorphic to either $\C P^{n}$ or the smooth quadric hypersurface $Q^n$.
\end{thm}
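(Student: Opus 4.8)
The plan is to deduce this from the deformation theory of rational curves on Fano manifolds, in the spirit of Mori theory, together with the recognition theorems for projective space and for the hyperquadric in terms of their varieties of minimal rational tangents. Since $M$ is Fano it is uniruled, so it carries dominating families of rational curves; I would fix one such family $\mathcal{K}$ of minimal anticanonical degree $d$, so that a general member $C$ is a free immersed rational curve with $-K_M\cdot C=d$. Because $i(M)\ge n$, every rational curve on $M$ has anticanonical degree at least $n$; in particular $d\ge n$.

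The first step is to pin down $d$. Fix a general point $x\in M$ and let $\mathcal{K}_x$ be the subfamily of curves in $\mathcal{K}$ through $x$; standard deformation theory on $\C P^1$ gives $\dim\mathcal{K}_x=d-2$. The tangent map $\tau_x$ sending a curve in $\mathcal{K}_x$ to its tangent direction at $x$ has image the variety of minimal rational tangents $\mathcal{C}_x\subseteq\mathbb{P}(T_xM)$, and the finiteness of $\tau_x$ onto its image (Kebekus) gives $\dim\mathcal{C}_x=d-2$. As $\mathcal{C}_x$ sits inside $\mathbb{P}(T_xM)\cong\mathbb{P}^{n-1}$, this forces $d-2\le n-1$, so $d\le n+1$; together with $d\ge n$ we get $d\in\{n,n+1\}$.

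The second step is the case analysis. If $d=n+1$, then $\dim\mathcal{C}_x=n-1=\dim\mathbb{P}(T_xM)$, hence $\mathcal{C}_x=\mathbb{P}(T_xM)$: through a general point there is a minimal rational curve in every tangent direction, and the classical recognition theorem for projective space --- essentially Mori's argument for Hartshorne's conjecture, in the form given by Cho--Miyaoka--Shepherd-Barron \cite{Mor79,CMSB02} --- yields $M\cong\C P^n$. If $d=n$, then $\dim\mathcal{C}_x=n-2$, so $\mathcal{C}_x$ is a hypersurface in $\mathbb{P}^{n-1}$. Using the positivity of the normal bundle of $C$, bend-and-break to exclude degenerations of the family, and the hypothesis that $M$ admits no rational curve of anticanonical degree below $n$, one analyzes this hypersurface and shows it is a smooth quadric $Q^{n-2}\subset\mathbb{P}^{n-1}$ --- precisely the variety of minimal rational tangents of the family of lines on $Q^n$. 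The recognition of the hyperquadric by its variety of minimal rational tangents, as in \cite{CMSB02} with the boundary case completed in \cite{DH17}, then gives $M\cong Q^n$.

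The main obstacle is the boundary case $d=n$: one must classify all possibilities for $\mathcal{C}_x$, ruling out reducible or singular hypersurfaces and the exotic Fano manifolds that could a priori occur with $i(M)=n$, before invoking the quadric recognition theorem. The full strength of $i(M)\ge n$ --- that \emph{every} rational curve, not merely a general member of $\mathcal{K}$, has degree at least $n$ --- is used exactly in this elimination. This is the delicate part that required the supplementary analysis of Miyaoka \cite{Miy04} and of Dedieu--H\"{o}ring \cite{DH17}; by contrast, the case $d=n+1$ is a direct adaptation of Mori's original argument.
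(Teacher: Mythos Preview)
The paper does not prove this theorem: it is quoted as a known result from the literature (Miyaoka \cite{Miy04} and Dedieu--H\"{o}ring \cite{DH17}) and used as a black box in the proof of the main theorem. There is therefore no proof in the paper to compare against.

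That said, your outline is a faithful sketch of the strategy in those references: reduce to a minimal dominating family of rational curves, use the dimension count $\dim\mathcal{C}_x=d-2\le n-1$ to pin down $d\in\{n,n+1\}$, invoke the Cho--Miyaoka--Shepherd-Barron characterization of $\C P^n$ when $\mathcal{C}_x=\mathbb{P}(T_xM)$, and for $d=n$ identify $\mathcal{C}_x$ as a smooth quadric hypersurface and apply the recognition theorem for $Q^n$. You are also right that the boundary case $d=n$ is the substantive one and that the global hypothesis $i(M)\ge n$ (not just minimality of the chosen family) is what rules out degenerate VMRTs; this is precisely where \cite{Miy04} and \cite{DH17} do the real work. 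Your sketch is correct at the level of a roadmap, but you should be aware that turning ``one analyzes this hypersurface and shows it is a smooth quadric'' into a proof is the content of those papers, not a routine verification.
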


By the above structure result, the main theorem follows from an estimate on the aniticanonical degree $-K_M\cdot C$ of any rational curve $C$ on $M$. 

To that end, let $C\subseteq M$ be a rational curve on $M$. By normalization, we get a holomorphic map 
\[f:\C P^1\to C\subseteq M.\]
Let $E=f^*TM$ be the pull-back of the holomorphic tangent bundle of $M$ and $h$ be the pull-back of the K\"{a}hler metric on $TM$. Equip $E$ with the pull back holomorphic structure and let $A_0$ be the Chern connection induced by $h$. Fix the standard Fubini-Study metric $\omega$ on $\C P^1$ and we deform $A_0$ by the Yang-Mills flow \ref{eq:YangMillsFlow}. The convergence theorem \ref{thm:ConvergenceTheorem} shows that $A(t)$ converges to a Yang-Mills connection $A_\infty$. The following lemma is a well known property of Yang-Mills connection on Riemann surfaces (see for example, \cite{AB83}). For reader's convinence, we include a proof here:
\begin{lem}
    $\sqrt{-1}\Lambda_\omega F_{A_\infty}$ is parallel with respect to the connection $A_\infty$.
\end{lem}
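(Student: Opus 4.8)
The plan is to exploit that a Yang-Mills connection on a Riemann surface has, by the Yang-Mills equation $D_{A_\infty}^*F_{A_\infty}=0$, a harmonic curvature form. On a complex curve the curvature $F_{A_\infty}$ is automatically a $(1,1)$-form, so writing $F_{A_\infty}=\sqrt{-1}\Lambda_\omega F_{A_\infty}\cdot\omega$ we see that $\sqrt{-1}\Lambda_\omega F_{A_\infty}$ is the only degree of freedom in $F_{A_\infty}$. The idea is that harmonicity of $F_{A_\infty}$ forces this contracted curvature, viewed as a section of $\sqrt{-1}\Li{u}(E)$, to be $D_{A_\infty}$-parallel.

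Concretely, first I would observe that since $\dim_\C M=1$, the bundle of $2$-forms is a line bundle spanned by $\omega$, and any $\End E$-valued $2$-form equals $(\Lambda_\omega$ of it$)\cdot\omega$ up to the normalization convention; in particular $F_{A_\infty}=-\sqrt{-1}(\sqrt{-1}\Lambda_\omega F_{A_\infty})\,\omega$, and the Bianchi identity $D_{A_\infty}F_{A_\infty}=0$ combined with $d\omega=0$ gives $D_{A_\infty}(\sqrt{-1}\Lambda_\omega F_{A_\infty})\wedge\omega=0$, hence $D_{A_\infty}(\sqrt{-1}\Lambda_\omega F_{A_\infty})=0$ once we note a $1$-form on a Riemann surface wedged with the area form vanishes only if the $1$-form does. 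Wait — that shows it directly, but the cleaner route is via the Yang-Mills equation: $0=D_{A_\infty}^*F_{A_\infty}$, and on a Kähler manifold $D_{A_\infty}^*=-\sqrt{-1}[\Lambda_\omega,D_{A_\infty}'']$ (a Kähler identity for bundle-valued forms), which applied to the $(1,1)$-form $F_{A_\infty}$ relates $D_{A_\infty}^*F_{A_\infty}$ to $D_{A_\infty}''(\sqrt{-1}\Lambda_\omega F_{A_\infty})$; combined with the analogous statement pairing the Bianchi identity with the other Kähler identity for $D_{A_\infty}'$, one concludes both $D_{A_\infty}'(\sqrt{-1}\Lambda_\omega F_{A_\infty})=0$ and $D_{A_\infty}''(\sqrt{-1}\Lambda_\omega F_{A_\infty})=0$, i.e. the section is parallel.

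So the key steps in order are: (i) use that $F_{A_\infty}$ is a $(1,1)$-form on a curve so it is determined by $\sqrt{-1}\Lambda_\omega F_{A_\infty}$; (ii) apply the Bianchi identity $D_{A_\infty}F_{A_\infty}=0$ to get $D_{A_\infty}'(\sqrt{-1}\Lambda_\omega F_{A_\infty})=0$; (iii) apply the Yang-Mills equation $D_{A_\infty}^*F_{A_\infty}=0$ together with the Kähler identity to get $D_{A_\infty}''(\sqrt{-1}\Lambda_\omega F_{A_\infty})=0$; (iv) conclude $D_{A_\infty}(\sqrt{-1}\Lambda_\omega F_{A_\infty})=0$, which is the assertion since $A_\infty$ is unitary so this is the full covariant derivative.

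The main obstacle, such as it is, is bookkeeping with the Kähler identities for $\End E$-valued forms and sign/normalization conventions for $\Lambda_\omega$ on a one-dimensional base — there is no real analytic difficulty because everything is algebraic once the convergence theorem \Cref{thm:ConvergenceTheorem} has been invoked to produce a genuine smooth Yang-Mills connection $A_\infty$. One should be slightly careful that the argument uses only that $F_{A_\infty}$ is of type $(1,1)$ (true on any Riemann surface) and harmonic, not that $E$ splits; the conclusion is exactly that $\sqrt{-1}\Lambda_\omega F_{A_\infty}$ is covariantly constant, which is what feeds into the subsequent application of \Cref{cor:MaximumPrincipleYangMills}.
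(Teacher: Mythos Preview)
Your overall approach---reduce the Yang--Mills equation on a curve to the statement that the contracted curvature is parallel---is the same as the paper's, which simply observes that $D_{A_\infty}^*F_{A_\infty}=0$ is equivalent to $D_{A_\infty}(\star F_{A_\infty})=0$ and that on a Riemann surface $\star F_{A_\infty}$ agrees with $\Lambda_\omega F_{A_\infty}$ up to a constant; your K\"ahler-identity computation is just an unpacking of this.

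Two corrections are in order, though. First, the aside claiming that ``a $1$-form on a Riemann surface wedged with the area form vanishes only if the $1$-form does'' is false: a $1$-form wedged with the area form is a $3$-form on a real $2$-manifold and hence identically zero, so that route yields no information---the Bianchi identity $D_{A_\infty}F_{A_\infty}=0$ is vacuous on a curve. Second, and relatedly, step (ii) in your summary is misattributed: the vanishing of $D_{A_\infty}'(\sqrt{-1}\Lambda_\omega F_{A_\infty})$ does not come from Bianchi. Both the $(1,0)$ and $(0,1)$ parts of $D_{A_\infty}(\sqrt{-1}\Lambda_\omega F_{A_\infty})$ come from the Yang--Mills equation: writing $D_{A_\infty}^*F_{A_\infty}=0$ by type gives $\partial_{A_\infty}^*F_{A_\infty}=0$ and $\bar\partial_{A_\infty}^*F_{A_\infty}=0$, and the two K\"ahler identities $[\Lambda_\omega,\partial_{A_\infty}]=\sqrt{-1}\,\bar\partial_{A_\infty}^*$, $[\Lambda_\omega,\bar\partial_{A_\infty}]=-\sqrt{-1}\,\partial_{A_\infty}^*$ then convert these (using that $(2,1)$- and $(1,2)$-forms vanish on a curve) into $\partial_{A_\infty}(\Lambda_\omega F_{A_\infty})=0$ and $\bar\partial_{A_\infty}(\Lambda_\omega F_{A_\infty})=0$. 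Alternatively, since $\sqrt{-1}\Lambda_\omega F_{A_\infty}$ is a Hermitian endomorphism and $A_\infty$ is unitary, one of the two vanishings implies the other by conjugation. Either way, Bianchi plays no role.
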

\begin{proof}
    Note that the Yang-Mills equation is equivalent to $D_A(\star F_{A_\infty})=0$, where $\star$ is the Hodge star operator on $\End E$ valued 2 forms. Moreover, since the base manifold is a Riemann surface, $\star F_{A_\infty}$ is equal to $\Lambda_\omega F_{A_\infty}$ up to a constant multiple. Hence, $D_A(\sqrt{-1}\Lambda_\omega F_{A_\infty})=0$.
\end{proof}
This lemma implies that $\frac{\sqrt{-1}}{2\pi}\Lambda_\omega F_{A_\infty}$ has constant eigenvalues $a_1\leq a_2\leq\cdots\leq a_n$ everywhere. Suppose that $\lambda_1<\cdots<\lambda_k$ are the distinct eigenvalues. It induces an orthogonal splitting 
\begin{equation}
    E=E_1\oplus\cdots\oplus E_k
\end{equation}
where 
\begin{equation}\label{eq:HermitianYangMills}
    \frac{\sqrt{-1}}{2\pi}\Lambda_\omega F_{A_\infty}|_{E_l}=\lambda_l\Id_{E_l},l=1,\cdots,k.
\end{equation} 
Since $\frac{\sqrt{-1}}{2\pi}\Lambda_\omega F_{A_\infty}$ is parallel, each $E_l$ is invariant under parallel translation. It follows that $E_l$'s are holomorphic subbundles of $(E,D''_{A_\infty})$. Because of \ref{eq:HermitianYangMills}, $h$ is a Hermitian-Einstein metric on $E_l$. By the easy direction of Kobayashi-Hitchin correspondence (cf.\cite{Kob80,Lub83}), $E_l$ is polystable and thus $E_l$ is a direct sum of copies of $\mathcal{O}(\lambda_l)$. Therefore, $a_i$'s are all integers and there is the isomorphism \begin{equation}\label{eq:Splitting}
    E\cong\mathcal{O}(a_1)\oplus\cdots\mathcal\oplus{O}(a_n).
\end{equation}

Next, we show that $a_1+a_2\ge 1$. By the assumption that $M$ has $2-$positive bisectional curvature, $(E,h)$ has $2-$quasi-positive curvature. (Note that we can only get quasi-positivity here because $df$ may vanish at some points of $\C P^1$.) It then follows from (2) of Corollary \ref{cor:MaximumPrincipleYangMills} that $\sqrt{-1}\Lambda_\omega F_{A}$ becomes $2-$positive for some time $t_0>0$. By compactness, we may assume that $\sqrt{-1}\Lambda_\omega F_A$ is $\varepsilon$ $2-$positive at $t_0$ for some positive number $\varepsilon$. Apply (1) of Corollary \ref{cor:MaximumPrincipleYangMills} and we deduce that $\sqrt{-1}\Lambda_\omega F_{A_\infty}$ is also $\varepsilon$ $2-$positive and in particular, 
\begin{equation}\label{eq:FirstInequality}
    a_1+a_2>0\Rightarrow a_1+a_2\ge 1.
\end{equation}

Let $D$ be the divisor consisting of points where $df$ vanishes, counted with order and $\mathcal{O}(D)$ be the associated line bundle. Then $\mathcal{O}(D)\cong\mathcal{O}(k),k\ge 0$. Note that $T_{\C P^1}\otimes\mathcal{O}(E)\cong\mathcal{O}(2+k)$ is a subbundle of $E$. It follows that $a_n\ge 2+k\ge 2$. Since $n\ge 3$, by combining with \ref{eq:FirstInequality}, we get the desired estimate on the anti-canonical degree of the rational curve $C$:
\begin{equation}
    -K_M\cdot C=\langle c_1(E),[\C P^1]\rangle=\sum_{i=1}^na_i=(a_1+a_2)+(a_3+\cdots+a_n)\ge 1+(n-3)+2=n.
\end{equation}
Hence, the pseudoindex $i(M)\ge n$. This completes the proof of the main theorem.

\section{Further Discussions}\label{Sec:FurtherDiscussions}

Continuing under the theme of this note, we talk about some related questions in this section.

The $2-$positivity of the bisectional curvature can be generalized to $m-$positivity on Fano manifolds:
\begin{defn}[$m-$positive bisectional curvature]
    Under the notations of \Cref{Sec:2PositiveBisectionalCurvature}, for any integer $1\leq m\leq n$, $(M,g)$ has $m-$positive bisectional curvature if 
    \[\lambda_1(x,U)+\cdots+\lambda_m(x,U)>0,\quad \forall x\in M, \forall U\in T_xM.\]
\end{defn}
\begin{rk}
    As we noted in the introduction, the $m-$positivity here is different from the one defined in \cite{Siu80}. 
\end{rk}

When $m=n$, the $m-$positivity of the bisectional curvature is just the Ricci positivity and it imposes no further restrictions on Fano manifolds. Therefore, we consider the case when $m<n$. By the same argument as in the proof of the main theorem, one can show that 
\begin{prop}\label{Prop:PseudoindexLowerBound}
    If $n>m$ and $M$ is an $n$ dimensional compact K\"{a}hler manifold with $m-$positive bisectional curvature, then there is a lower bound on the pseudoindex of $M$:
    \begin{equation}
        i(M)\ge n-m+2.
    \end{equation}
\end{prop}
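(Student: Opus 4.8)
The plan is to run the argument of \Cref{thm:MainTheorem} essentially verbatim, after first recording the $m$-positive analogue of \Cref{cor:MaximumPrincipleYangMills}. Call a Hermitian section $f$ of $\sqrt{-1}\Li{u}(E)$ \emph{$\varepsilon$ $m$-positive} if $\lambda_1(f_x)+\cdots+\lambda_m(f_x)\ge\varepsilon$ for all $x\in M$, and \emph{quasi $m$-positive} if $\lambda_1(f_x)+\cdots+\lambda_m(f_x)\ge 0$ everywhere with strict inequality somewhere. The first step is to check that \Cref{cor:MaximumPrincipleYangMills} holds with $2$ replaced by $m$. The only input needing revision is the convexity of the cone $C_\varepsilon^{(m)}\subset\sqrt{-1}\Li{u}(r)$ of $\varepsilon$ $m$-positive Hermitian matrices; this follows from the Ky Fan variational characterization
\[
\lambda_1(U)+\cdots+\lambda_m(U)=\inf_{\{v_1,\dots,v_m\}}\sum_{i=1}^m\langle Uv_i,v_i\rangle,
\]
the infimum being over orthonormal $m$-frames, which exhibits $\lambda_1+\cdots+\lambda_m$ as an infimum of linear functionals, hence as a concave function of $U$. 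Thus $C_\varepsilon^{(m)}$ is closed, convex and $U(r)$-invariant, and it is trivially preserved by the ODE $df/dt=0$, so Hamilton's maximum principle \Cref{thm:HamiltonMaximumPrinciple} applies exactly as in the proof of \Cref{cor:MaximumPrincipleYangMills}.

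Next I would repeat the proof of (2) of \Cref{thm:MainTheorem}. Let $C\subseteq M$ be any rational curve, $f:\C P^1\to C\subseteq M$ its normalization, $E=f^*TM$ with the pulled-back metric $h$, and deform the Chern connection by the Yang-Mills flow; by \Cref{thm:ConvergenceTheorem} it converges to a Yang-Mills connection $A_\infty$ with $\sqrt{-1}\Lambda_\omega F_{A_\infty}$ parallel. As before, the parallel eigenspace decomposition together with the easy direction of the Kobayashi-Hitchin correspondence produces integers $a_1\le\cdots\le a_n$ and a splitting $E\cong\mathcal{O}(a_1)\oplus\cdots\oplus\mathcal{O}(a_n)$ with $-K_M\cdot C=\sum_i a_i$. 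The $m$-positive bisectional curvature hypothesis makes $(E,h)$ quasi $m$-positive (only quasi-, since $df$ may vanish), so the $m$-version of \Cref{cor:MaximumPrincipleYangMills} forces $\sqrt{-1}\Lambda_\omega F_{A_\infty}$ to be $\varepsilon$ $m$-positive for some $\varepsilon>0$; hence $a_1+\cdots+a_m>0$, i.e. $a_1+\cdots+a_m\ge 1$.

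The last step is the arithmetic. From $a_1+\cdots+a_m\ge 1$ and $a_1\le\cdots\le a_m$ we get $m\,a_m\ge 1$, so $a_m\ge 1$ (an integer), and therefore $a_i\ge 1$ for all $i\ge m$; in particular $a_{m+1},\dots,a_{n-1}\ge 1$. On the other hand $T_{\C P^1}\otimes\mathcal{O}(D)\cong\mathcal{O}(2+k)$ with $k\ge 0$ embeds as a line subbundle of $E$, so $a_n\ge 2$. Since $n>m$, the largest number $a_n$ is not among $a_1,\dots,a_m$, so the three estimates may be summed separately:
\[
-K_M\cdot C=(a_1+\cdots+a_m)+\sum_{i=m+1}^{n-1}a_i+a_n\ \ge\ 1+(n-1-m)+2\ =\ n-m+2,
\]
and taking the infimum over rational curves gives $i(M)\ge n-m+2$.

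I do not expect a serious obstacle: the argument is a mechanical adaptation of the main proof. The only genuinely new ingredient is the convexity of $C_\varepsilon^{(m)}$, which is the Ky Fan maximum principle, and the only place the hypothesis $n>m$ enters essentially is the final display, where one needs the top Grothendieck number $a_n$ to be counted separately from the first $m$ of them.
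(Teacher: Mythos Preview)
Your proposal is correct and is exactly the argument the paper has in mind: the paper does not spell out a proof of this proposition but simply says ``by the same argument as in the proof of the main theorem,'' and your write-up is precisely that argument with $2$ replaced by $m$. The Ky Fan formula is the right replacement for the $m=2$ variational identity used in \Cref{cor:MaximumPrincipleYangMills}, and your arithmetic step---deducing $a_m\ge 1$ (hence $a_i\ge 1$ for $m\le i\le n-1$) from $a_1+\cdots+a_m\ge 1$---actually makes explicit a step the paper leaves implicit even in the $m=2$ case.
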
  

Given this and Fujita's classification of $n$ dimensional Fano manifolds with pseudoindex $n-1$ (cf.\cite{Fuj90}), our main theorem is expected to be extended to a characterization theorem of compact K\"{a}hler manifolds with K\"{a}hler metrics whose bisectional curvature is $3-$positive. Of course, extra work is required to check whether or not each member in Fujita's list admits K\"{a}hler metrics with $3-$positive bisectional curvature.

It is also interesting to know if the converse of Proposition \ref{Prop:PseudoindexLowerBound} is true or not. When $m=1,2$, it is true by the pseudoindex characterization of $\C P^n$ and $Q^n$, along with our calculation in \Cref{Sec:2PositiveBisectionalCurvature}. The case when $m=3$ might be handled similarly due to Fujita's classification. However, the remaining cases seems unknown. In other words, if $n>m>2$ and $M^n$ is a compact K\"{a}hler manifold with $i(M)\ge n-m+2$, does $M$ admit a K\"{a}hler metric with $m-$positive bisectional curvature?

We are able to prove the main theorem because there are classification theorems when the pseudoindex is large. When the pseudoindex is smaller than $n-1$, there is no longer full classification results available in algebraic geometry. Instead, given the close relation between $m-$positivity and pseudoindex, it is also interesting to consider the differential geometric approach to classifying Fano manifolds based on $m-$positivity with geometric analytic methods such as the K\"{a}hler-Ricci flow. 

There are also various related curvature conditions in literature, for example, those in \cite{NWZ21,Ni21,NZ25}. Here we discuss the positivity of orthogonal Ricci curvature, proposed in \cite{NWZ21}. Let $M^n$ be an $n$ dimensional compact K\"{a}hler manifold. For any holomorphic tangent vector $X$, the orthogonal Ricci curvature along $X$ is defined as 
\begin{equation}
    \Ricci^\perp(X)=\Ricci(X,\Bar{X})-H(X)
\end{equation}
where $H(X)$ is the holomorphic sectional curvature along $X$. $M^n$ has positive orthogonal Ricci curvature if $\Ricci^\perp(X)>0$ for any holomorphic tangent vector $X$. It is clear that the positivity of $\Ricci^\perp$ is weaker than $m-$positivity of bisectional curvature for all $m<n$. Among other things, the paper \cite{NWZ21} deduced a complete classification of 3 dimensional K\"{a}hler manifolds with $\Ricci^\perp>0$:
\begin{thm}[\text{\cite[Theorem 1.8]{NWZ21}}]
    A 3 dimensional compact K\"{a}hler manifold with $\Ricci^\perp>0$ is biholomorphic to either $\C P^3$ or $Q^3$.
\end{thm}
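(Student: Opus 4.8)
One can sketch a proof of this result within the framework of the present note, as follows. The plan is to follow the strategy of part (2) of \Cref{thm:MainTheorem}: reduce the statement to the inequality $i(M)\ge 3$ for the pseudoindex and then apply the classification of Fano $n$-folds with $i(M)\ge n$ due to Miyaoka and Dedieu--H\"{o}ring (\cite{Miy04,DH17}). Since $\dim_{\C}M=3$, this gives the conclusion as soon as we know $M$ is Fano, so the argument has two parts. The first part is to upgrade the curvature hypothesis to the assertion that $M$ is a Fano threefold: a standard averaging of $\Ricci^{\perp}$ over the unit sphere of each tangent space (using $\Ricci^{\perp}(X)=\Ricci(X,\Bar{X})-H(X)$, $H$ the holomorphic sectional curvature) shows that $\Ricci^{\perp}>0$ forces positive scalar curvature, hence $M$ has Kodaira dimension $-\infty$; combined with the projectivity and uniruledness arguments of \cite{NWZ21} this yields that $M$ is Fano, in particular rationally connected, so it carries rational curves. \emph{I expect this first part to be the main obstacle}: positivity of the orthogonal Ricci curvature is much weaker than the $2$-positivity of bisectional curvature used in \Cref{thm:MainTheorem}, so one cannot simply quote that theorem, and passing from the curvature inequality to ampleness of $-K_{M}$ genuinely uses global algebro-geometric input rather than the local estimate below.

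The second and main part is the pseudoindex bound. Let $C\subseteq M$ be a rational curve, $f\colon\C P^{1}\to C\subseteq M$ its normalization, and set $E=f^{*}TM$ with the pulled-back metric $h$, holomorphic structure and Chern connection $A_{0}$, exactly as in \Cref{Sec:Proof}. The differential is a holomorphic bundle map $df\colon T_{\C P^{1}}\cong\mathcal O(2)\to E$; letting $D$ be its vanishing divisor and $L\subseteq E$ the saturation of its image, one has $L\cong\mathcal O(2+k)$ with $k=\deg D\ge 0$, so $\deg L\ge 2$, just as the bound $a_{n}\ge 2+k$ is obtained in \Cref{Sec:Proof}. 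Let $Q=E/L$ be the rank $(n-1)$ quotient bundle with the quotient metric. Since $\deg E=\deg L+\deg Q$, it suffices to prove $\deg Q\ge 1$.

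For this I would invoke the standard curvature comparison for a Hermitian quotient bundle: the curvature of $Q$ dominates the orthogonal projection of the curvature of $E$, the difference being the semi-positive square of the second fundamental form, so $\sqrt{-1}\Lambda_{\omega}F_{Q}\ge\pi_{Q}\bigl(\sqrt{-1}\Lambda_{\omega}F_{A_{0}}\big|_{L^{\perp}}\bigr)$ as Hermitian endomorphisms of $Q\cong L^{\perp}$. At a point $z$ with $df_{z}\ne 0$, writing $\xi_{z}=df_{z}(e_{z})$ for a unit tangent vector $e_{z}$ of $\C P^{1}$, one has $\sqrt{-1}\Lambda_{\omega}F_{A_{0}}(z)=R(\xi_{z},\Bar{\xi_{z}})$ on $E_{z}=T_{f(z)}M$ (up to a fixed positive constant, as is implicit in \Cref{Sec:Proof}), while $L_{z}=\C\,\xi_{z}$, so the trace of the right-hand side at $z$ equals $\tr R(\xi_{z},\Bar{\xi_{z}})-R(\xi_{z},\Bar{\xi_{z}},\xi_{z},\Bar{\xi_{z}})/|\xi_{z}|^{2}=\Ricci^{\perp}(\xi_{z})>0$; and at the finitely many zeros of $df$ one has $F_{A_{0}}=f^{*}F_{TM}=0$, so this trace vanishes there. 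Hence $\tr(\sqrt{-1}\Lambda_{\omega}F_{Q})$ is a nonnegative function on $\C P^{1}$ that is strictly positive off a finite set, and integrating gives $\deg Q=\tfrac{\sqrt{-1}}{2\pi}\int_{\C P^{1}}\tr F_{Q}>0$, whence $\deg Q\ge 1$. Therefore $-K_{M}\cdot C=\deg E=\deg L+\deg Q\ge 2+1=3$ for every rational curve $C$, so $i(M)\ge 3=\dim M$, and \cite{Miy04,DH17} identifies $M$ with $\C P^{3}$ or $Q^{3}$.

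It is worth noting that, unlike in \Cref{thm:MainTheorem}, the Yang--Mills flow and the maximum principle Corollary \ref{cor:MaximumPrincipleYangMills} are not needed here. There one had to control the \emph{two smallest} eigenvalues $a_{1}+a_{2}$ of the limiting connection, which is precisely what Hamilton's maximum principle supplies; in the present situation the orthogonal Ricci curvature controls a \emph{trace} over an $(n-1)$-dimensional subspace, i.e.\ the total degree of the rank $(n-1)$ quotient $Q$, and a single Chern--Weil integration of the curvature comparison already suffices. One could of course still run the Yang--Mills flow on $Q$ and appeal to \Cref{thm:ConvergenceTheorem}, but that would yield nothing beyond the topological value of $\deg Q$.
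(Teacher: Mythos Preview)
The paper does not actually prove this theorem: it is quoted in \Cref{Sec:FurtherDiscussions} purely as a citation of \cite[Theorem 1.8]{NWZ21}, with no argument supplied, so there is no ``paper's own proof'' to compare against.

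That said, your sketch is a coherent adaptation of the strategy of \Cref{Sec:Proof} to the weaker hypothesis $\Ricci^{\perp}>0$, and the pseudoindex step is essentially correct. By passing to the quotient $Q=E/L$ and invoking the standard fact that curvature increases on Hermitian quotients, you reduce the bound $\deg Q\ge 1$ to a single Chern--Weil integral of a trace, which is precisely what $\Ricci^{\perp}$ controls. One minor imprecision: at a generic point $z$ the trace you compute is $|\xi_{z}|^{2}\,\Ricci^{\perp}(\hat\xi_{z})$ rather than $\Ricci^{\perp}(\xi_{z})$, since $\xi_{z}=df_{z}(e_{z})$ is not unit; this does not affect positivity. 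Your closing observation is also apt: because $\Ricci^{\perp}$ controls an $(n-1)$-dimensional trace rather than the sum of the two smallest eigenvalues, the quotient-bundle comparison replaces the Yang--Mills flow and Corollary~\ref{cor:MaximumPrincipleYangMills} entirely.

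The genuine gap is the one you already flag: deducing that $M$ is Fano (or even uniruled, so that rational curves exist at all) from $\Ricci^{\perp}>0$. You defer this to \cite{NWZ21}, which is circular since the whole theorem is theirs; and the averaging you mention yields only positive scalar curvature, which is far from ampleness of $-K_{M}$, while $\Ricci^{\perp}>0$ does not directly force $\Ricci>0$. So your argument should be read as: \emph{granting} the structural input from \cite{NWZ21} that produces rational curves, the pseudoindex bound $i(M)\ge 3$ follows from the quotient-bundle curvature comparison, after which \cite{Miy04,DH17} identifies $M$ with $\C P^{3}$ or $Q^{3}$.
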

This is a generalization of our main theorem in 3 dimensional case. In dimension 4, the paper also obtains a partial classification result. In higher dimensions, much less is known so far. A better knowledge on $m-$positive bisectional curvature should also be useful towards understanding the positive orthogonal Ricci curvature in higher dimensions.

\bibliographystyle{amsalpha}
\bibliography{references}

\end{document}